\title{Dirichlet problem for maximal graphs of higher codimension}
\author{Yang Li}
\date{\today}
\newtheorem{thm}{Theorem}[section]
\newtheorem{lem}[thm]{Lemma}
\newtheorem{prop}[thm]{Proposition}
\theoremstyle{definition}
\newtheorem{rmk}{Remark}
\newtheorem*{Notation}{Notation}
\newtheorem*{Acknowledgement}{Acknowledgement}
\newcommand{\ie}{\emph{i.e.} }
\newcommand{\cf}{\emph{cf.} }
\newcommand{\R}{\mathbb{R}}
\newcommand{\Z}{\mathbb{Z}}
\newcommand{\norm}[1]{\left\lVert#1\right\rVert}
\newcommand{\Lap}{\Delta}
\DeclareMathOperator{\Tr}{Tr}
\begin{document}
	\maketitle

\begin{abstract}
We consider the Dirichlet boundary value problem for graphical maximal submanifolds inside Lorentzian type ambient spaces, and obtain general existence and uniqueness results which apply to any codimension. 
\end{abstract}

\section{Introduction}

Let $\R^{n,m}=\R^n_x \oplus \R^m_y$ denote the vector space endowed with a Lorentzian type bilinear form $\langle (x, y), (x, y)\rangle= |x|^2-|y|^2=x\cdot x-y\cdot y$ of signature $(n,m)$. Following conventions in general relativity, a vector $v\in \R^{n,m}$ is called spacelike (resp. null/timelike) if $\langle v, v \rangle$ is positive (resp. zero/negative).
A submanifold $\Sigma$ of dimension $n$ is called \textbf{spacelike} if the induced metric on its tangent spaces has the Riemannian signature; if morever the mean curvature vector vanishes identically on $\Sigma$, then $\Sigma$ is called a \textbf{maximal submanifold}. The goal of this paper is to study the Dirichlet problem for graphical solutions to the maximal submanifold equation (a.k.a `\textbf{maximal graphs}').

To set up, we represent $\Sigma$ as the graph of a smooth function $\vec{u}=(u^1, \ldots, u^m): \overline{\Omega}\to \R^m$, where $\Omega\subset \R^n$ is a bounded domain with smooth boundary $\partial \Omega$. The boundary data of $\Sigma$ is prescribed by a smooth function $\vec{\phi}: \partial \Omega\to \R^m$:
\[
\partial \Sigma= \{ (x, \vec{\phi}(x)) | x\in \partial \Omega       \}.
\]
The induced Riemannian metric on $\Sigma$ is
$
g_{ij}= \delta_{ij}- \partial_i \vec{u}\cdot \partial_j \vec{u}.
$
The maximal condition with the prescribed boundary data is equivalent to the \textbf{Dirichlet problem}:
\begin{equation}\label{Dirichlet}
\begin{cases}
\sum_{i,j=1}^n g^{ij} \partial_i \partial_j
u^\theta=0, \quad \theta=1, 2, \ldots m, \\
\vec{u}|_{ \partial \Omega}= \vec{\phi}.
\end{cases}
\end{equation}
The spacelike condition of the graph ensures this system is quasilinear elliptic. The boundary data is called \textbf{acausal} if $|\vec{\phi}(x)-\vec{\phi}(x')|< |x-x'|$ for any $x,x'\in \partial \Omega$. An equivalent way to write the Dirichlet problem is
\begin{equation}\label{Dirichlet2} 
\begin{cases}
\sum_{i,j} \partial_i( g^{ij}\sqrt{\det(g)}\partial_j u^\theta )=0, \quad \theta=1, \ldots, m. \\
\vec{u}|_{ \partial \Omega}= \vec{\phi}.
\end{cases}
\end{equation}
For smooth spacelike graphs, either (\ref{Dirichlet}) or (\ref{Dirichlet2}) implies
\[
\sum_i \partial_i( g^{ij}\sqrt{\det(g)})=0,\quad j=1,2,\ldots n.
\]
from which the equivalence of (\ref{Dirichlet}) and (\ref{Dirichlet2}) is clear.

Our main result is
\begin{thm}\label{Maintheorem}
	Given a bounded domain $\Omega$ with smooth boundary $\partial \Omega$, and given smooth acuasal boundary data $\vec{\phi}:\partial \Omega\to \R^m$, then there exists a unique smooth maximal graph $\vec{u}:\overline{\Omega}\to \R^m$ solving the Dirichlet problem (\ref{Dirichlet}), and morever it maximizes the volume functional among all spacelike graphs with the same boundary data.
\end{thm}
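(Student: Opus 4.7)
The plan is to combine a continuity method for existence with a variational (concavity) argument that simultaneously yields uniqueness and the volume-maximization claim. The central technical ingredient will be uniform a priori $C^{2,\alpha}$ estimates, and the crucial geometric input is that acausality of $\vec\phi$ forces a quantitative spacelike bound on any solution.

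For the variational side I would work with $V(\vec u) = \int_\Omega \sqrt{\det(\delta_{ij} - \partial_i \vec u \cdot \partial_j \vec u)}\, dx$ on the set $\mathcal{S}_{\vec\phi}$ of smooth spacelike graphs with boundary data $\vec\phi$. Two observations drive the argument: (i) $\mathcal{S}_{\vec\phi}$ is convex, because if $F_k = \nabla \vec u_k$ have operator norm $<1$ then $\|(sF_1+(1-s)F_2)^T v\| \le s\|F_1^T v\| + (1-s)\|F_2^T v\| < \|v\|$ for any nonzero $v$, so the convex combination remains spacelike; (ii) the Lagrangian $L(F) = \sqrt{\det(I - FF^T)}$ is concave in $F$, which in codimension one is the classical Hessian computation for $\sqrt{1-|p|^2}$ and which in higher codimension can be verified by a direct block-matrix Hessian calculation or by reduction along singular value decompositions. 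Hence $V$ is concave on $\mathcal{S}_{\vec\phi}$. Any smooth solution of (\ref{Dirichlet}) is a critical point of $V$ and is therefore a global maximum among spacelike competitors, which is the volume-maximization statement. Uniqueness then follows since any two maximizers $\vec u_1, \vec u_2$ must have $V$ constant along the segment $s\vec u_1 + (1-s)\vec u_2$, and strict concavity of $L$ forces $\nabla \vec u_1 = \nabla \vec u_2$ a.e., after which the shared Dirichlet data gives $\vec u_1 = \vec u_2$.

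For existence I would run a continuity method along $\vec\phi_t = t\vec\phi$, $t \in [0,1]$, each manifestly acausal. At $t=0$ the zero map solves, and openness at each $t$ follows from the implicit function theorem: the linearization at a solution is the Jacobi operator, which is self-adjoint and negative semidefinite on Dirichlet data (we sit at the maximum of a concave functional), hence invertible. Closedness reduces to uniform $C^{2,\alpha}$ estimates. A $C^0$ bound is automatic since each component $u^\theta$ solves the scalar linear elliptic equation $g^{ij}\partial_i\partial_j u^\theta = 0$ with no zero-order term, so the maximum principle gives $|u^\theta| \le \|\phi^\theta\|_{L^\infty(\partial\Omega)}$. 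The boundary gradient bound exploits acausality: at each $x_0 \in \partial\Omega$ one sandwiches $\vec\phi_t$ locally between affine maps of operator norm strictly below $1$, and spacelike comparison with these barriers pins $\nabla \vec u$ quantitatively inside the light cone along $\partial\Omega$. Once the spacelike margin is quantitative everywhere, $g^{ij}$ is uniformly bounded, the divergence-form system (\ref{Dirichlet2}) becomes uniformly elliptic, and Schauder estimates for quasilinear systems together with bootstrapping produce $C^{k,\alpha}$ control to all orders.

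The main obstacle I anticipate is propagating the spacelike margin from $\partial\Omega$ into the interior. In codimension one, the scalar maximum principle applied to a carefully chosen function of $|\nabla u|$ closes this cleanly, but in higher codimension no scalar maximum principle is available for the coupled system $\vec u$, and the operator norm of $\nabla \vec u$ is not smooth. My expectation is that the correct scalar surrogate is the Lorentzian volume density $\sqrt{\det g}$ (or its logarithm), which on a maximal graph satisfies a Jacobi-type equation whose sign is controlled by the maximal condition and the ambient Lorentzian structure; applying the maximum principle to this single scalar function should then transport the boundary spacelike margin across all of $\Omega$, closing the gap between the boundary $C^1$ estimate and the uniform ellipticity needed for the rest of the argument.
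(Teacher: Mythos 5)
Your high-level plan --- continuity method, a priori gradient and ellipticity bounds, and a variational argument for uniqueness and volume-maximization --- matches the paper's general strategy, and your intuition about using a scalar surrogate like $\log\det(g_{ij})$ to propagate the spacelike margin into the interior is exactly right (the paper uses non-negativity of $\text{Ric}_\Sigma$ and a Bochner formula for $\sum_\theta|\nabla u^\theta|^2$, and notes the $\log\det g$ subharmonicity alternative in a remark). However, there are two genuine gaps.

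First, the claimed concavity of $L(F)=\sqrt{\det(I-F^TF)}$ on $\{\|F\|_{\mathrm{op}}<1\}$ is \emph{false} as soon as $\min(n,m)\geq 3$. Take $n=m=r\geq 3$ and $F=tI_n$: then $L(t)=(1-t^2)^{r/2}$ and
\[
L''(t)=r\,(1-t^2)^{\,r/2-2}\bigl[(r-1)t^2-1\bigr],
\]
which is strictly positive for $1/\sqrt{r-1}<t<1$. Thus $L$ fails to be concave precisely in the regime the theorem is supposed to cover (e.g.\ $(n,m)=(3,3)$ or $(3,19)$). Since your uniqueness and ``critical point is global max'' conclusions lean entirely on global concavity of $V$, both break down in higher codimension. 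What is true is that $\log L$ is concave (being $\tfrac12\log\det$ of the affine map $F\mapsto\begin{psmallmatrix}I&F^T\\F&I\end{psmallmatrix}$), but that does not give concavity of $L$ itself nor of $V$. The paper proves volume-maximization and uniqueness by a different, genuinely geometric route: given two spacelike graphs $\Sigma_1,\Sigma_2$ with the same acausal boundary, it represents $\Sigma_2$ as a normal graph $p\mapsto p+\nu(p)$ over $\Sigma_1$ (a degree/linking-number argument), splits the volume form via the second fundamental form of $\Sigma_1$, and applies AM--GM to get $\mathrm{Vol}(\Sigma_2)\leq\int_{\Sigma_1}(1-\tfrac1n\langle\vec H_{\Sigma_1},\nu\rangle)^n$; when $\vec H_{\Sigma_1}=0$ this gives the maximizing property and, tracking equalities, uniqueness. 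Nothing in that argument requires concavity of $L$.

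Second, the boundary gradient estimate via ``sandwiching $\vec\phi$ between affine maps of operator norm below $1$'' is too weak. An affine map $\R^n\to\R^m$ is itself a maximal graph, so comparing a candidate solution with it yields no strict sign in the maximum principle; moreover it is unclear what one-sided comparison even means in codimension $m>1$, and in the non-convex case an affine competitor with small Lipschitz constant matching $\vec\phi$ at a boundary point need not dominate $\vec\phi$ on all of $\partial\Omega$. The paper's barriers are \emph{hypersurfaces} $\tilde\Gamma_{K,\Lambda}\subset\R^{n,m}$, rotationally symmetric under $SO(n-1)\times SO(m-1)$ and cut out by a radial ODE profile $w=f_{K,\Lambda}(r)$; the crucial structural fact (Lemma \ref{HPiupperbound}) is that \emph{every} spacelike $n$-plane $\Pi$ tangent to $\tilde\Gamma_{K,\Lambda}$ has $H_\Pi\geq-\Lambda>0$, which is what makes a one-sided comparison with a maximal graph work (Lemma \ref{comparisonprinciple}). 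Building those barriers, tuning $(\xi,\eta,K,\Lambda)$ at each boundary point and for each unit direction $\vec\theta\in\R^m$, and verifying $\partial\Sigma\subset\{w\leq f_{K,\Lambda}(r)\}$ is the technical heart of the existence proof, and an affine substitute does not reproduce it. I would also flag that ``Schauder plus bootstrap'' undersells the step from uniform ellipticity to $C^{1,\alpha}$: the coefficients $g^{ij}$ depend on the full gradient of the coupled system, there is no vector-valued De Giorgi--Nash, and the paper closes this using the interior second-fundamental-form bound of Jost--Xin together with a Krylov boundary H\"older estimate and an interpolation argument.
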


The system  (\ref{Dirichlet}) has strong formal analogies with the minimal surface system, the only difference in the setup being the signature of the ambient space. In the codimension one case, namely when $m=1$, the equations become scalar valued, and there is a rich theory for both the minimal and the maximal cases \cite{GilbargTrudinger}\cite{BartnikSimon}. In higher codimension, however, the minimal surface system is known to be poorly behaved by the striking results in \cite{Lawson}: even if we assume $\Omega$ is a bounded, smooth and convex domain, the Dirichlet problem can fail to have a solution; when the solution exists, it can be non-unique; solutions do not need to be volume minimising. The counterexamples in \cite{Lawson} require large gradient and involve a certain amount of nontrivial topology.

The principal interest of this paper lies in the contrast between the maximal and minimal graphs, both geometrically and analytically. This contrast is partially known to previous workers on related questions; to illustrate with a few elementary observations:
\begin{itemize}
\item	Fix the splitting $\R^{n,m}=\R^n \oplus \R^m$. Then any $n$-dimensional spacelike subspace of $\R^{n,m}$ is graphical over $\R^n$. So if we ignore the boundary, then any spacelike submanifold is locally an unramified covering over some open subset in the fixed $\R^n$. Intuitively, spacelike submanifolds are not far from being graphical over a fixed $\R^n$. 
\\
\item  Spacelike graphs have an \textbf{a priori gradient bound}, namely that for any unit vector $v=(v_1, \ldots v_n)\in \R^n$, 
\begin{equation}\label{apriorigradientbound}
|\sum_j v_j \partial_j \vec{u}| < 1.
\end{equation}
This already prevents the mechanisms of counterexamples in \cite{Lawson}.	When the domain $\Omega$ is convex, this gradient bound makes evident the necessity of the acausal boundary condition.
\\
\item   Maximal graphs have non-negative Ricci curvature (\cf \cite{Donaldson1}, and Lemma \ref{Riccipositive} below). This follows immediately from the Gauss equation. In contrast, minimal graphs have non-positive Ricci curvature, which offers less analytic control.

\end{itemize}
Another much deeper fact is the \textbf{Bernstein type theorem}, stating that maximal graphs over the entire $\R^n$ must be linear (the codimension $m=1$ case is due to \cite{Yau}, and for the higher codimension case see Theorem 4.2 in \cite{Jost}), in contrast with the minimal graphs, for which the Bernstein theorem is only satisfied for $n\leq 7$  even for $m=1$ (\cf survey in \cite{Yau}).

Theorem \ref{Maintheorem} can be seen as the generalisation of the codimension one case treated by R. Bartnik and L. Simon \cite{BartnikSimon}, and the proof strategy also follows theirs quite closely. The main issue is that tangent vectors on maximal graphs have the a priori possibility of approaching null directions, thereby destroying uniform ellipticity estimates for the quasilinear system (\ref{Dirichlet}). The core of this paper is to prevent this from happening through a barrier construction and a maximum principle argument.

\begin{rmk}
The same Dirichlet problem was also considered in \cite{Thorpe}, which relied on the much more restrictive hypothesis that the boundary data $\vec{\phi}$ is small in the $C^2$-norm.
\end{rmk}

\begin{rmk}
A particular consequence of Theorem \ref{Maintheorem} is that any given smooth acausal data $\vec{\phi}: \partial \Omega\to \R^m$ can be realised as the boundary of a spacelike graph over $\overline{\Omega}$. Does this fact have a more elementary proof without using PDE theory?
\end{rmk}

Theorem \ref{Maintheorem} is pertinent to $G_2$ geometry by the following result of Baraglia.

\begin{thm}
\cite{Baraglia} Let $B$ be a simply connected domain in $\R^3$. The data of torsion free $G_2$ manifolds $M$ with coassociative $T^4$-fibrations $M\to B$ such that the $T^4$ fibres are flat, are up to isomorphism in bijective correspondence with maximal immersions $f: B\to \R^{3,3}=H^2(T^4)$  (\ie the image $f(B)$ is a maximal submanifold in $\R^{3,3}$ modulo self intersection issues) up to translations.
\end{thm}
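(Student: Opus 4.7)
The plan is to construct explicit maps in both directions and verify they are mutually inverse up to the specified translation ambiguity. For the forward direction, given a torsion-free $G_2$ manifold $(M, \varphi)$ with a flat coassociative $T^4$-fibration $\pi: M \to B$, I would first invoke McLean's deformation theory for compact coassociatives: infinitesimal deformations of a fibre $L_b$ are parametrised by the self-dual harmonic 2-forms, which for flat $T^4$ equal the three-dimensional subspace $H^+(L_b) \subset H^2(L_b, \R)$. Smoothness of the fibration together with $\dim B = 3$ yield a canonical isomorphism $T_b B \cong H^+(L_b)$, realised concretely as $v \mapsto [i_{\tilde v} \varphi|_{L_b}]$ for any horizontal lift $\tilde v$ (well-defined modulo the coassociative condition $\varphi|_{L_b} = 0$). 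Since $B$ is simply connected, the flat Gauss-Manin bundle $R^2 \pi_* \R$ trivialises, giving canonical identifications $H^2(L_b, \R) \cong H^2(T^4, \R) = \R^{3,3}$. Combining these produces an $\R^{3,3}$-valued 1-form on $B$ whose closedness is a direct consequence of $d\varphi = 0$; simple connectivity then integrates it to a map $f: B \to \R^{3,3}$, unique up to an additive constant. Spacelikeness of $f$ is automatic since $df_b(T_b B) = H^+(L_b)$ is a positive-definite 3-plane in $\R^{3,3}$. The maximal equation for $f$ is extracted from $d *\varphi = 0$: in local coordinates $\varphi = \sum_i \omega^i \wedge dx^i + dx^1 \wedge dx^2 \wedge dx^3$ with $\omega^i$ the fibrewise self-dual harmonic representatives of $\partial_i f$, a direct computation matches the coclosed equation with (\ref{Dirichlet2}) for $f$.

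For the reverse direction, fix once and for all a flat lattice $\Lambda \subset \R^4$ so that $H^2(\R^4/\Lambda, \R)$ with its intersection form is canonically $\R^{3,3}$. Given a spacelike maximal immersion $f: B \to \R^{3,3}$, the positive-definite 3-plane $df(T_b B) \subset \R^{3,3}$ determines a unique flat conformal class on $T^4$ (and hence, combined with the intersection form, a unique flat metric), since in dimension four the conformal class of a metric is equivalent to its self-dual subspace in $\Lambda^2$. This produces a smooth $T^4$-bundle $M \to B$ of flat tori. On the total space I define
\begin{equation*}
\varphi = \sum_{i=1}^3 \omega^i \wedge dx^i + dx^1 \wedge dx^2 \wedge dx^3 \in \Omega^3(M),
\end{equation*}
where $x^i$ are coordinates on $B$ and $\omega^i$ is the harmonic self-dual representative on $L_b$ of the class $\partial_i f \in H^2(L_b, \R)$. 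Pointwise $G_2$-compatibility follows from the standard linear-algebraic characterisation of $G_2$ via the split $\R^7 = \R^3 \oplus \R^4$ and the spacelike condition on $df$.

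The main obstacle is verifying torsion-freeness $d\varphi = d *\varphi = 0$. The closed condition reduces to the symmetry $\partial_i \omega^j = \partial_j \omega^i$, which is automatic since both sides are mixed second partials of $f$ viewed as an $H^2$-valued function. The coclosed condition, after the same local-coordinate unpacking as in the forward direction, becomes precisely the maximal graph equation (\ref{Dirichlet2}) for $f$, hence holds by hypothesis. To close the correspondence, one checks the two constructions are mutually inverse: forward-then-reverse recovers the original data because the identifications via Gauss-Manin and McLean are canonical once a basepoint is fixed, while reverse-then-forward trivially returns the input $f$ up to translation, matching the stated equivalence relation. The translation ambiguity in $f$ corresponds exactly to the freedom in choosing a zero-section of the $T^4$-bundle $M \to B$.
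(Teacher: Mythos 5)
This theorem is not proved in the paper at all: it appears purely as background motivation and is cited verbatim to \cite{Baraglia}. There is therefore no ``paper's own proof'' to compare against, and your proposal is in effect a reconstruction of Baraglia's argument rather than a re-derivation of something in this paper.

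As such a reconstruction, the broad outline is sound and matches the standard semi-flat adiabatic picture: McLean's identification of coassociative deformations with self-dual harmonic $2$-forms, the Gauss--Manin trivialisation over a simply connected base, the encoding of the $G_2$ $3$-form as $\varphi = \sum \omega^i \wedge dx^i + dx^{123}$, with $d\varphi = 0$ yielding a closed $\R^{3,3}$-valued $1$-form and $d*\varphi = 0$ yielding the maximal graph equation. But there is a real gap in your reverse direction. You assert that a spacelike $3$-plane $df(T_bB) \subset \R^{3,3}$, ``combined with the intersection form,'' determines a \emph{unique flat metric} on the fibre. It does not: a maximal positive $3$-plane in $\bigl(\Lambda^2 V,\ \wedge\bigr)$ determines only the conformal class of a metric on $V \cong \R^4$, and hence only a conformal class of flat structures on $T^4$. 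To pin down the actual fibre metric, hence the fibre volume, you need more: the $\omega^i$ must form a definite (hypersymplectic) triple normalised by a condition of the form $\omega^i \wedge \omega^j = 2\,Q_{ij}\,\mu$, where $Q_{ij}$ is the induced metric $\langle \partial_i f, \partial_j f\rangle$ on $B$ and $\mu$ is the fibre volume form; this is the algebraic content of the $G_2$-compatibility you invoke but do not unpack. Without spelling out this normalisation, the reverse construction is underdetermined and your claimed bijection does not close. The rest of the argument (translation ambiguity, mutual inversion via the canonical identifications) is fine once this is repaired, but the repair is exactly where the content of Baraglia's theorem lives, so it cannot be omitted.
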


Donaldson \cite{Donaldson1} recently proposed that  maximal submanifolds with $(n,m)=(3,19)$ should arise from the adiabatic limiting description of $G_2$ submanifolds admitting a coassociative K3 fibration. Dirichlet problem for maximal submanifolds are tied to boundary value problem for $G_2$ manifolds; this application was the original motivation of this paper, which we leave for future investigation.

\begin{Notation}
The Lorentzian type inner product on $\R^{n,m}$ will be denoted $\langle, \rangle$. The Euclidean inner products on $\R^n$ and $\R^m$ are denoted by the dot product; thus in particular $\langle v, v'\rangle=-v\cdot v'$ on the $\R^m$ factor. The Levi-Civita connection is denoted by $\nabla$. The second fundamental form $A$ on the maximal graph $\Sigma$ is defined by $A(e_i, e_j)=(\nabla_{e_i} e_j)^\perp$ where $\{e_i\}$ is an orthonormal frame on $\Sigma$, and the mean curvature is $\vec{H}_{\Sigma}=\sum_i A(e_i, e_i)$. The Laplacian on $\Sigma$ follows the analysts' convention, namely $\Lap_{\Sigma} f= \text{div} \nabla f$.
\end{Notation}


\begin{Acknowledgement}
	The author is grateful to his PhD supervisor Simon Donaldson and co-supervisor Mark Haskins for their inspirations, Jason Lotay for discussions, and the Simons Center for hospitality. 
	
	This work was supported by the Engineering and Physical Sciences Research Council [EP/L015234/1], the EPSRC Centre for Doctoral Training in Geometry and Number Theory (The London School of Geometry and Number Theory), University College London. The author is also funded by Imperial College London for his PhD studies.
\end{Acknowledgement}

\section{Uniqueness of maximal graph}

The goal of this Section is to show

\begin{thm}\label{uniquenessmaximal}(\textbf{Uniqueness})
Let $\Sigma_1$, $\Sigma_2$ be two smooth maximal graphs over the smooth bounded domain $\Omega\subset \R^n$, with the same acausal boundary data $\partial \Sigma$. Then $\Sigma_1=\Sigma_2$, and morever it \textbf{maximizes the volume functional} among all smooth spacelike graphs with the same boundary data.
\end{thm}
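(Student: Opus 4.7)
The plan is to deduce both the uniqueness and the volume-maximizing assertion from a single concavity argument on the volume functional, following the strategy of \cite{BartnikSimon} in codimension one. Let $\mathcal{A}$ denote the set of smooth spacelike maps $\vec{u}\colon\overline{\Omega}\to\R^m$ with $\vec{u}|_{\partial\Omega}=\vec{\phi}$. Because the spacelike condition $\|\nabla\vec{u}\|_{\mathrm{op}}<1$ cuts out an open convex subset of the space of matrix-valued gradients, $\mathcal{A}$ is a convex subset of the affine space of candidate maps, so a straight-line interpolation between any two elements of $\mathcal{A}$ stays in $\mathcal{A}$. The volume functional
\[
V(\vec{u})=\int_{\Omega}\sqrt{\det(\delta_{ij}-\partial_i\vec{u}\cdot\partial_j\vec{u})}\,dx
\]
has Euler--Lagrange equation exactly \eqref{Dirichlet2}.

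Assume $\vec{u}_0,\vec{u}_1\in\mathcal{A}$ are two solutions to \eqref{Dirichlet}, set $\vec{u}_t=(1-t)\vec{u}_0+t\vec{u}_1$ and $\phi(t)=V(\vec{u}_t)$. Integrating by parts against the divergence form \eqref{Dirichlet2}, using that the variation $\vec{v}=\vec{u}_1-\vec{u}_0$ vanishes on $\partial\Omega$, gives $\phi'(0)=\phi'(1)=0$. The crux is to show that $\phi$ is strictly concave on $[0,1]$, whereupon $\phi'(0)=\phi'(1)=0$ forces $\phi$ to be constant and hence $\vec{u}_0\equiv\vec{u}_1$. The volume-maximizing assertion follows from the same line argument: for any spacelike competitor $\vec{u}$ with the prescribed boundary, running the concavity of $V$ along the segment from the maximal solution $\vec{u}_0$ to $\vec{u}$ and using $\phi'(0)=0$ yields $V(\vec{u})\le V(\vec{u}_0)$.

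The substantive obstacle is establishing the strict concavity of $\phi$. In codimension one the Lagrangian $L(p)=\sqrt{1-|p|^2}$ on the open unit ball is itself strictly concave, so pointwise concavity of the integrand suffices. In higher codimension the matrix Lagrangian $L(p)=\sqrt{\det(I-p^Tp)}$ is \emph{not} globally concave in $p$: a direct Hessian calculation at $p=\sigma I$ (with $n=m$) and $\sigma$ close to $1$ exhibits a positive eigenvalue in the direction $q=I$. The resolution is that these ``bad'' directions for $q=\nabla\vec{v}$ are not freely available to variations $\vec{v}$ with $\vec{v}|_{\partial\Omega}=0$; after integration, the Dirichlet boundary condition produces identities such as
\[
\int_{\Omega}(\mathrm{div}\,\vec{v})^2\,dx=\int_{\Omega}\partial_i v^j\,\partial_j v^i\,dx,
\]
obtained by integrating by parts twice, which cancel precisely the destabilising trace part of the pointwise Hessian and leave a strictly negative integrand of the schematic form $-|\nabla\vec{v}|_{g}^2$.

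The technical heart of the proof is therefore a careful second-variation calculation at a general $\vec{u}_t$ along the interpolation, combining the explicit Hessian of $L$ with the boundary integration-by-parts identities, and using the a priori gradient bound \eqref{apriorigradientbound} to keep the induced metrics $g(\vec{u}_t)$ uniformly positive definite along the way. Once the integrated concavity of $V$ is in hand, the uniqueness and volume-maximization conclusions follow exactly as in the codimension-one argument of \cite{BartnikSimon}.
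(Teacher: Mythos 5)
Your proposal reduces everything to a single unproven claim: that $\phi(t)=V(\vec u_t)$ is concave along the straight-line interpolation $\vec u_t=(1-t)\vec u_0+t\vec u_1$. You correctly observe that pointwise concavity of $L(p)=\sqrt{\det(I-pp^T)}$ fails in higher codimension, and you correctly note that this is the crux; but you then defer the crux (``a careful second-variation calculation \dots combining the explicit Hessian of $L$ with the boundary integration-by-parts identities'') without carrying it out. The identity you propose, $\int_\Omega(\mathrm{div}\,\vec v)^2\,dx=\int_\Omega\partial_i v^j\,\partial_j v^i\,dx$, only makes sense for a variation $\vec v\colon\Omega\to\R^n$, whereas here $\vec v\colon\Omega\to\R^m$; and even if one sets $n=m$ and identifies the two factors, the destabilising term in the integrated Hessian is not $(\mathrm{tr}\,\nabla\vec v)^2$ but $(\mathrm{tr}(\nabla\vec v^{\,T}g^{-1}\nabla\vec u_t))^2$, whose coefficient $g^{-1}(\vec u_t)\nabla\vec u_t$ varies with $x$ and with $t$. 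That variable coefficient is exactly what spoils the null-Lagrangian cancellation you are counting on: at a background with constant gradient the minors of $\nabla\vec v$ integrate to zero and the bad term does vanish, but along an interpolation between two genuine solutions the background gradient is not constant and there is no reason the cancellation survives. In short, the proposed proof asserts integrated concavity of $V$ along arbitrary chords in the convex set of spacelike maps, which is strictly stronger than anything established in the paper (the paper proves only that a critical point is a global maximum), and no argument for it is given.

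The paper's route avoids this difficulty entirely by not interpolating through graph functions. It first shows (via a degree/linking-number argument using the acausal boundary condition and the transversality of spacelike $n$-planes with timelike $m$-planes) that $\Sigma_2$ can be written as a normal graph $p\mapsto p+\nu(p)$ over $\Sigma_1$. Pulling back to $\Sigma_1$, it decomposes the induced metric $Q_{ij}$ into a tangential piece $Q'_{ij}$ and a normal piece $Q''_{ij}$; the key observation is that $Q''_{ij}$ is negative semi-definite because $(T_p\Sigma_1)^\perp$ is timelike, so $\det Q\le\det Q'$ pointwise. Diagonalising the shape operator $A_\nu$ and applying the arithmetic–geometric mean inequality then gives $\sqrt{\det Q'}\le(1-\tfrac1n\langle\vec H_{\Sigma_1},\nu\rangle)^n$, which is $\le 1$ when $\Sigma_1$ is maximal. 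This yields the global comparison $\mathrm{Vol}(\Sigma_2)\le\mathrm{Vol}(\Sigma_1)$ with equality forcing $\nu$ parallel, hence zero. No concavity along line segments is needed; the Lorentzian sign of the normal bundle and the vanishing of $\mathrm{tr}\,A_\nu$ at the maximal submanifold do all the work. If you want to salvage your approach you would have to prove the integrated concavity claim directly, which I do not believe is straightforward and is, in any case, absent from the proposal. (One small factual slip along the way: at $p=\sigma I$ with $n=m=2$ the Hessian of $L$ in the direction $q=I$ is actually $-2$, not positive; the indefiniteness you describe only kicks in for $n=m\ge 3$.)
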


We begin by observing

\begin{lem}\label{acausalboundary}
Let $\Sigma$ be any smooth space-like graph over the smooth domain $\Omega\subset \R^n$, with acausal boundary data. Then 
\[
|\vec{u}(x)-\vec{u}(x')|<|x-x'|, \forall x,x'\in \overline{\Omega}.
\]
\end{lem}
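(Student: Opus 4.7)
The plan is to study the ratio
\[
\phi(x, x') := \frac{|\vec u(x) - \vec u(x')|^2}{|x - x'|^2}
\]
on $(\overline{\Omega})^2$ away from the diagonal, and to show that $\sup \phi < 1$. The first step is to upgrade the pointwise spacelike bound \eqref{apriorigradientbound} to a uniform bound $|D\vec u(x) e|^2 \leq c^2$ with $c < 1$, valid for all $x \in \overline{\Omega}$ and all unit $e \in \R^n$; this is immediate from continuity of $D\vec u$ on the compact set $\overline{\Omega}$.

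Next I would control $\phi$ near the diagonal. After smoothly extending $\vec u$ to an open neighborhood of $\overline{\Omega}$ and applying Taylor's theorem, any sequence $(x_n, x_n') \in (\overline{\Omega})^2$ with $|x_n - x_n'| \to 0$, $x_n \to x^*$, and direction $(x_n - x_n')/|x_n - x_n'| \to e^*$ satisfies
\[
\phi(x_n, x_n') \to |D\vec u(x^*)\, e^*|^2 \leq c^2.
\]
Consequently, if a maximizing sequence for $M := \sup \phi$ clusters near the diagonal then already $M \leq c^2 < 1$ and the lemma follows; otherwise, compactness forces the maximizing sequence to subconverge to some $(x^*, x'^*) \in (\overline{\Omega})^2$ with $x^* \neq x'^*$, at which $\phi$ attains its supremum.

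In the attained case the argument splits by boundary location. If $x^*, x'^* \in \partial\Omega$, the acausal hypothesis directly gives $\phi(x^*, x'^*) < 1$. Otherwise, by the obvious symmetry I may assume $x^* \in \Omega$, so $\phi$ has an interior maximum in its first argument. Writing $v := x^* - x'^*$ and $\vec w := \vec u(x^*) - \vec u(x'^*)$, the condition $\nabla_x \phi(x^*, x'^*) = 0$ rearranges to
\[
D\vec u(x^*)^{\top}\, \vec w \;=\; \phi(x^*, x'^*)\, v.
\]
Pairing with $v$, applying Cauchy-Schwarz and the uniform gradient bound $|D\vec u(x^*)\, v| \leq c|v|$, then using the defining identity $|\vec w|^2 = \phi(x^*, x'^*)\, |v|^2$, gives $\phi(x^*, x'^*) \leq c^2 < 1$ (the degenerate subcase $\vec w = 0$ giving $\phi = 0$ is harmless).

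The main, though mild, obstacle is this interior-maximum computation: it is essential that the \emph{strict} pointwise spacelike inequality be promoted to a uniform strict bound $c < 1$ on all of $\overline{\Omega}$ before the critical-point identity can be usefully closed, and this is exactly where compactness of $\overline{\Omega}$ enters. Combining the three cases then yields $M < 1$, which is the lemma.
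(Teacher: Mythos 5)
Your proof is correct, but it takes a genuinely different route from the paper's. The paper's argument is a one-dimensional path argument: join $x$ to $x'$ by the straight segment, integrate the pointwise gradient bound \eqref{apriorigradientbound} over the portions of the segment lying in $\overline{\Omega}$, invoke the acausal hypothesis directly on the complementary sub-segments whose endpoints lie on $\partial\Omega$, and assemble via the triangle inequality. The only real care needed there is handling the sub-segments that leave $\overline{\Omega}$ when the domain is non-convex. Your argument instead runs a maximum-of-difference-quotient scheme: after extracting a uniform strict bound $c<1$ on $|D\vec u\cdot e|$ by compactness of $\overline{\Omega}\times S^{n-1}$, you examine a maximizing pair for $\phi(x,x')=|\vec u(x)-\vec u(x')|^2/|x-x'|^2$, disposing of the near-diagonal and doubly-boundary cases directly and closing the interior case with the first-order condition $D\vec u(x^*)^{\top}\vec w=\phi\,v$, which dotted with $v$ and Cauchy--Schwarzed gives $\phi\le c^2$. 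This sidesteps the geometry of the segment meeting $\partial\Omega$ entirely, at the mild cost of needing a smooth extension of $\vec u$ for the Taylor step near the diagonal and a slightly more involved critical-point computation. As a bonus your version produces a uniform bound $\sup\phi<1$ rather than only pointwise strictness, though the constant depends on the particular graph through $\sup|D\vec u|$; both arguments are sound and give the lemma.
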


\begin{proof}
Join $x$ and $x'$ by the straight line segment $tx+(1-t)x'$. If the segment is contained in $\overline{\Omega}$, then the estimate follows from the gradient bound (\ref{apriorigradientbound}). If the segment leaves $\overline{\Omega}$ (which is possible since the domain may not be convex), then we subdivide the segment according to the the time it crosses the boundary, apply the acausal boundary condition for every sub-segment lying in the exterior domain, and use the triangle inequality to conclude.
\end{proof}

The key to the uniqueness theorem is the ability to represent $\Sigma_2$ as a section of the normal bundle of $\Sigma_1$ inside the Lorentzian type space $\R^{n,m}$. More precisely,

\begin{lem}
Let $\Sigma_1, \Sigma_2$ be two spacelike $n$-dimensional graphs with the same acausal boundary $\partial \Sigma$.
For every $p\in \Sigma_1$, there is a unique normal vector $\nu(p)\in \R^{n,m}$, such that $\nu(p)\perp T_p \Sigma_1$ and $p+\nu(p)\in \Sigma_2$. Morever $\nu(p)$ depends smoothly on $p$.
\end{lem}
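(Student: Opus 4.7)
The plan is to separate the three assertions. Uniqueness and smooth dependence are essentially formal from the linear algebra of the Lorentzian decomposition, while existence requires a connectedness argument.

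For uniqueness at a fixed $p \in \Sigma_1$, I would use that $T_p \Sigma_1$ is spacelike of dimension $n$, so its Lorentzian orthogonal complement $(T_p \Sigma_1)^\perp$ is negative definite of dimension $m$, i.e.\ timelike. If $\nu_1 \neq \nu_2$ were two normals at $p$ with $p+\nu_i \in \Sigma_2$, the difference $\nu_1-\nu_2 = (p+\nu_1)-(p+\nu_2)$ would be simultaneously timelike (as an element of $(T_p \Sigma_1)^\perp$) and, by Lemma \ref{acausalboundary} applied to $\vec{u}_2$, spacelike (as a difference of distinct points of $\Sigma_2$). This contradicts the signature. The same spacelike-vs-timelike dichotomy shows that at any solution $(p,q)$ with $q = p+\nu$, the subspaces $T_q \Sigma_2$ and $(T_p \Sigma_1)^\perp$ intersect only in $0$, so they are transverse in $\R^{n,m}$.

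For existence I would run a standard openness-plus-closedness argument on $\Sigma_1$, with base case the boundary. Setting $\nu(p)=0$ for $p \in \partial \Sigma_1 = \partial \Sigma_2$ gives $\partial \Sigma_1 \subset U$, where $U \subset \Sigma_1$ denotes the set of $p$ for which a solution $\nu(p)$ exists. Openness of $U$, together with smooth dependence of $\nu$ on $p$, follows from the implicit function theorem applied to the smooth map $F(p,q) := \pi_{T_p \Sigma_1}(q-p)$, where $\pi_{T_p \Sigma_1}$ is the Lorentzian orthogonal projection onto $T_p \Sigma_1$ (well-defined because $T_p \Sigma_1$ is non-degenerate); the transversality observation above identifies $\partial_q F = \pi_{T_p \Sigma_1}|_{T_q \Sigma_2}$ as an isomorphism. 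Closedness of $U$ relies on the fact that $\Sigma_2$ is compact as a graph over the compact domain $\overline{\Omega}$: for $p_n \to p$ with $p_n \in U$, the points $q_n = p_n + \nu(p_n) \in \Sigma_2$ admit a convergent subsequence $q_n \to q \in \Sigma_2$, and the continuous variation of $T_{p_n} \Sigma_1$ in $p_n$ forces the limit $\nu := q-p$ to lie in $(T_p \Sigma_1)^\perp$. Since $\Sigma_1$ is connected ($\overline{\Omega}$ being connected), $U = \Sigma_1$.

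The step I expect to be most delicate is closedness: a priori, as one moves $p$ from the boundary into the interior of $\Sigma_1$, the intersection point $q(p)$ could drift outside $\Sigma_2$. Compactness of $\Sigma_2$, a direct consequence of $\Sigma_2$ being a smooth graph over the bounded closed domain $\overline{\Omega}$, is precisely what rules this out; once closedness is secured, connectedness of $\Sigma_1$ immediately promotes the local existence given by the implicit function theorem into the global statement of the lemma.
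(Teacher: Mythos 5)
Your uniqueness argument (the spacelike-versus-timelike dichotomy for $\nu_1-\nu_2$), the transversality observation, and the use of the implicit function theorem for smoothness all match the paper. Where you diverge is the existence step: you replace the paper's linking-number computation with an open--closed argument along $\Sigma_1$, and that route has a genuine gap at the boundary. At $p_0\in\partial\Sigma_1$ the solution is $q_0=p_0\in\partial\Sigma_2$, and the implicit function theorem produces a local solution map $q(p)$ into a smooth \emph{extension} of $\Sigma_2$ past its boundary; nothing in the IFT alone forces $q(p)$ to land back inside $\Sigma_2$, rather than on the exterior side, as $p$ moves into $\mathrm{int}\,\Sigma_1$. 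Your compactness argument does give closedness of $U$, and the signature argument shows that an interior $p\in U$ has an interior $q$, so $U\cap\mathrm{int}\,\Sigma_1$ is open and closed in the connected set $\mathrm{int}\,\Sigma_1$. Together with $\partial\Sigma_1\subset U$, this leaves exactly two alternatives, $U=\Sigma_1$ and $U=\partial\Sigma_1$, and the open--closed machinery by itself cannot exclude the second.

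What breaks this tie is an orientation (positivity of intersection) input, which is precisely what the paper's argument supplies: for interior $p$ the normal $m$-plane $p+(T_p\Sigma_1)^\perp$ misses $\partial\Sigma$, so the linking number of $\partial\Sigma$ with this plane is well defined; computing it with $\Sigma_1$ gives $1$; and since a spacelike $n$-plane and a timelike $m$-plane always intersect with sign $+1$ (by connectedness of the spacelike Grassmannian), $\Sigma_2$ meets the normal plane at exactly one point. In your framework the gap can be repaired by checking that the local solution map $p\mapsto q(p)$ at a boundary point is an orientation-preserving local diffeomorphism fixing $\partial\Sigma$ pointwise, hence sends the interior side to the interior side: in graph coordinates its linearization is $(I-M_{12})^{-1}(I-M_{11})$ with $M_{\alpha\beta}=(D\vec u_\alpha)^{T}D\vec u_\beta$, and both factors have positive determinant by the gradient bound. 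But this is the paper's positivity-of-intersection argument in different clothes, not a consequence of the IFT and compactness alone, so as written your existence proof is incomplete.
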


\begin{proof}
Denote $(T_p\Sigma_1)^\perp$ as the orthogonal complement of the spacelike subspace $T_p\Sigma_1\subset \R^{n,m}$. We are required to find a unique intersection point of $\Sigma_2$ with the normal affine plane $p+(T_p \Sigma_1)^\perp$. As a preliminary observation, since  $\Sigma_2$ is spacelike of dimension $n$ while the normal plane is timelike of dimension $m$, the intersection must be transverse and of complementary dimension in $\R^{n,m}$. Morever this intersection has a sign: by the connectedness of the Grassmannian of spacelike $n$-planes in $\R^{n,m}$, we can consistently give orientations such that any  spacelike $n$-plane and any  timelike $m$-plane in $\R^{n,m}$ have intersection number 1.

When $p\in \partial \Sigma$ then this intersection point is just $p$ itself, namely $\nu(p)=0$. It is unique, because any $q\in \Sigma_2\setminus \{p\}$ must be spacelike separated from $p$ by the acausal condition, so $q$ cannot lie in the normal plane.

When $p$ is an interior point, then the normal plane cannot intersect $\partial \Sigma$ by the above argument. This means the linking number of $\partial \Sigma$ with the normal plane is well defined: it is just the degree of $[\partial \Sigma]\in H_{n-1}(  \R^{n,m}\setminus( p+ (T_p\Sigma_1)^\perp) )\simeq\Z$, or equivalently the count of intersection numbers of $\Sigma_i$ with the normal plane, for $i=1,2$. Clearly $\Sigma_1$ intersects the normal plane at a unique point $p$, so the linking number is 1, and by the positivity of intersection $\Sigma_2$ must intersect the normal plane transversely at 1 point.

Finally, to see $\nu$ is smooth, we can apply the implicit function theorem to the defining conditions
\[
\begin{cases}
\nu(p) \perp T_p\Sigma_1, \\
q=p+\nu(p)  \in \Sigma_2
\end{cases}
\]
and notice the nondegeneracy condition is precisely that the intersection of $T_q\Sigma_2$ with $(T_p\Sigma_1)^\perp$ is transverse.
\end{proof}

\begin{proof}
(Theorem \ref{uniquenessmaximal})
Let $\Sigma_1$, $\Sigma_2$ be any two spacelike graphs with the same boundary $\partial \Sigma$.
 We write $\Sigma_2=\{ q=p+\nu(p) | p\in \Sigma_1       \}.
$ Let $e_1, \ldots e_n$ be a pointwise orthonormal basis of $T_p \Sigma_1$, then a basis of tangent vectors to $\Sigma_2$ at $q$ is given by $e_i+ \nabla_{e_i} \nu$ where $\nabla$ denotes the Levi-Civita connection, hence the metric tensor on $\Sigma_2$ is described by
the positive definite matrix $Q_{ij}=\langle e_i+ \nabla_{e_i} \nu, e_j+ \nabla_{e_j} \nu\rangle$, and the volume element on $\Sigma_2$ is $\sqrt{\det(Q)} dvol_{\Sigma_1}$. We have
\[
\text{Vol}(\Sigma_2)\leq \int_{\Sigma_1} \sqrt{\det(Q)} dvol_{\Sigma_1},
\]
where the inequality signifies the a priori posibility that a point $q\in\Sigma_2$ can be represented by several $p\in \Sigma_1$.

We now decompose $\nabla_{e_j}\nu= ( \nabla_{e_j}\nu  )^{T\Sigma_1}+ ( \nabla_{e_j}\nu  )^{T\Sigma_1\perp}
$ into the parts parallel and perpendicular to $T\Sigma_1$. Then
\[
\begin{cases}
Q_{ij}=Q'_{ij}+ Q_{ij}'',  \\

 Q_{ij}'=\langle e_i+ (\nabla_{e_i} \nu)^{T\Sigma_1}, e_j+ (\nabla_{e_j} \nu)^{T\Sigma_1}\rangle, \\
 
 Q_{ij}''= \langle   ( \nabla_{e_i}\nu  )^{T\Sigma_1\perp},   ( \nabla_{e_j}\nu  )^{T\Sigma_1\perp} \rangle. \\
\end{cases}
\]
Observe $Q_{ij}''$ is negative semi-definite by the timelike nature of $(T_p\Sigma_1)^\perp$, so the matrix $(Q_{ij}')\geq (Q_{ij})>0$, whence $\det(Q)\leq \det(Q')$.

On the other hand $Q_{ij}'$ is intimately related to the second fundamental form of $\Sigma_1$ in the normal direction $\nu$:
\[
\langle \nabla_{e_i} \nu, e_k\rangle= -\langle \nu, \nabla_{e_i}e_k\rangle=   - A_\nu( e_i, e_k   ), 
\]
where $A_\nu$ is a symmetric matrix on $T_p\Sigma_1$. Thus $(\nabla_{e_i} \nu)^{T\Sigma_1}= -\sum_k  A_\nu( e_i, e_k) e_k$. We now demand that in the orthonormal frame $\{e_i\}$, the matrix $A_\nu$ is diagonal, with eigenvalues $\lambda_1, \ldots \lambda_n$. Then $Q_{ij}'=(1-\lambda_i)^2 \delta_{ij}$. By the positive definiteness of $Q_{ij}'$ we see $\lambda_i\neq 1$. We claim $\lambda_i<1$: this is because $\max_i \lambda_i$ is a continous function on $\Sigma_1$, but on $\partial \Sigma$ it takes value 0 since $A_\nu=0$ there. Thus arithmetic-geometric inequality implies
\[
\det(Q')^{1/2} = \prod_i (1-\lambda_i) \leq ( 1- \frac{1}{n} \Tr A_\nu    )^n= (1- \frac{\langle \vec{H}_{\Sigma_1}, \nu \rangle}{n}  )^n,
\]
where $\vec{H}_{\Sigma_1}$ denotes the mean curvature vector on $\Sigma_1$.

Combining the above discussions, 
\begin{equation}
\text{Vol}(\Sigma_2)\leq \int_{\Sigma_1} \sqrt{\det(Q)}dvol_{\Sigma_1} \leq \int_{\Sigma_1} \sqrt{\det(Q')}dvol_{\Sigma_1}\leq \int_{\Sigma_1}(1- \frac{\langle \vec{H}_{\Sigma_1}, \nu \rangle}{n}  )^n dvol_{\Sigma_1}.
\end{equation}
In particular, if $\Sigma_1$ is a maximal submanifold, namely when $\vec{H}_{\Sigma_1}=0$, then $\text{Vol}(\Sigma_2)\leq \text{Vol}(\Sigma_1)$, so $\Sigma_1$ is indeed a maximizer of the volume functional.

Reversing the roles of $\Sigma_1$ and $\Sigma_2$, we see that if both $\Sigma_1$ and $\Sigma_2$ are maximal submanifolds, then their volumes are equal. The conditions to achieve all the equalities in the estimates force $\nu$ to be a parallel vector in $\R^{n,m}$; since $\nu=0$ on $\partial \Sigma$, it must then vanish globally, thus $\Sigma_1=\Sigma_2$.
\end{proof}

The maximality of the volume functional also has an infinitesimal version. Let $\Sigma_1$ be a smooth maximal graph as before, then the first order variation of the volume functional is zero. The second order variation can be extracted from the above computation, by taking $|\nu|<<1$:
\[
\begin{split}
\text{Vol}(\Sigma_2)& =\int_{\Sigma_1} \prod_i (1-\lambda_i) (1- \frac{1}{2} \sum_j |(\nabla_{e_j} \nu)^{ T\Sigma_1, \perp  } |^2 ) dvol_{\Sigma_1} + O(\nu^3) \\
& = \int_{\Sigma_1}  (1- \frac{1}{2}\sum_j \lambda_j^2-     \frac{1}{2}\sum_j |(\nabla_{e_j} \nu)^{ T\Sigma_1, \perp  } |^2 ) dvol_{\Sigma_1} + O(\nu^3) \\
& = \int_{\Sigma_1}  (1- \frac{1}{2}\sum_{i,j} |\langle A(e_i, e_j), \nu\rangle|^2- \frac{1}{2} \sum_j |(\nabla_{e_j} \nu)^{ T\Sigma_1, \perp  } |^2 ) dvol_{\Sigma_1} + O(\nu^3) \\
& = \int_{\Sigma_1}  (1- \frac{1}{2}\sum_{j} | (\nabla_{e_j}\nu )^{T\Sigma_1} |^2-  \frac{1}{2} \sum_j |(\nabla_{e_j} \nu)^{ T\Sigma_1, \perp  } |^2 ) dvol_{\Sigma_1} + O(\nu^3) 
\end{split}
\]
Here the second equality uses $\lambda_i=O(\nu)$ and $\sum_i \lambda_i=0$. From this, the \textbf{second variation of volume functional} induced by a normal vector field $\nu$ (defined as $\frac{\partial^2}{\partial t^2 }|_{t=0} \text{Vol}(\Sigma_1+ t\nu  )$) is
\begin{equation}\label{secondvariationofvolume}
\begin{split}
\delta^2 \text{Vol}(\nu, \nu) = 
\int_{\Sigma_1}   -\sum_{j} | (\nabla_{e_j}\nu )^{T\Sigma_1} |^2-   \sum_j |(\nabla_{e_j} \nu)^{ T\Sigma_1, \perp  } |^2 ) dvol_{\Sigma_1} .
\end{split}
\end{equation}
We see the second variation at $\Sigma_1$ is \textbf{negative definite}. The non-degeneracy is because if $\delta^2 \text{Vol}(\nu, \nu)=0$, then $\nabla \nu=0$, but $\nu$ is assumed to vanish on the boundary, so must be zero identically. In contrast, for minimal surfaces, the standard second variation formula does not always enjoy the positive definite property.

\section{Barrier construction and gradient estimate}

\subsection{Comparison hypersurfaces}

In this Section we introduce a family of hypersurfaces in $\R^{n,m}$, which will later serve as barriers to achieve boundary gradient estimates. In the special case of $m=1$, these reduce to rotationally symmetric constant mean curvature hypersurfaces in $\R^{n,1}$, which can be found by solving an ODE (\cf \cite{BartnikSimon}).

 Write $\R^{n,m}=\R^n_x\oplus \R^m_y$, and fix $\xi\in \R^n$, $\eta\in \R^m$. We denote the Euclidean distances by $r=|x-\xi|$ and $w=|y-\eta|$. Given a curve $\Gamma=\{ w=f(r)   \}$ in the $w-r$ plane where $f$ is a smooth positive function to be specified, then we obtain by rotation a hypersurface $\tilde{\Gamma}=\{w=f(r)\}\subset \R^{n,m}$, which by construction is symmetric under $SO(n-1)\times SO(m-1)$, and is foliated by $S^{n-1}\times S^{m-1}$. We demand $|f'|<1$, so the hypersurface has a well defined normal vector field in $\R^{n,m}$:
 \[
 \vec{n}= \frac{1}{  \sqrt{1-f'^2} }( \frac{\partial}{\partial w}+ f' \frac{\partial}{\partial r}  ), \quad \vec{n}\cdot \vec{n}=-1.
 \]

The information of the second fundamental form of the hypersurface is encoded in $\nabla_v \vec{n}$ for tangent vectors $v$ to $\tilde{\Gamma}$ at the point $(x,y)\in \R^{n,m}$:
\begin{itemize}
\item When $v=v_1$ is tangent to $S^{n-1}\times \{y\}$, a sphere of radius $r$, then
\[
\nabla_v \vec{n}= \frac{f'}{ \sqrt{1-f'^2} } \nabla_v \frac{\partial}{\partial r}= \frac{f'}{ r\sqrt{1-f'^2} }v, \quad \langle \nabla_v \vec{n}, v \rangle = \frac{f' |v|^2}{ r\sqrt{1-f'^2} }. 
\] \\
\item When $v=v_2$ is tangent to $\{x\}\times S^{m-1}$, a sphere of radius $w=f(r)$, then
\[
\nabla_v \vec{n}= \frac{1}{ f\sqrt{1-f'^2} } v , \quad \langle \nabla_v \vec{n}, v \rangle = \frac{- |v|^2}{ f\sqrt{1-f'^2} }. 
\]
Here one needs to be careful that $v$ is timelike, and $\langle v, v \rangle=-|v|^2$. 
\\

\item When $v=v_3$ is tangent to the $\tilde{\Gamma}$ but orthogonal to $S^{n-1}\times S^{m-1}$, then $v\in \R^{n,m}$ is proportional to $\frac{\partial}{\partial r}+ f' \frac{\partial}{\partial w}$ which is spacelike, and
\[
\nabla_v \vec{n}= \frac{f''}{ (1-f'^2)^{3/2} } v , \quad \langle \nabla_v \vec{n}, v \rangle= \frac{f''}{ (1-f'^2)^{3/2} } \langle v , v \rangle.
\] 
\end{itemize}
In general we can decompose a tangent vector to $\tilde{\Gamma}$  into the 3 types, to write out the second fundamental form of $\tilde{\Gamma}$:
\begin{equation}\label{secondfundamentalformhypersurface}
v=v_1+v_2+v_3, \quad
-\langle \nabla_v \vec{n}, v \rangle= \frac{-f' }{ r\sqrt{1-f'^2} }|v_1|^2+ \frac{1 }{ f\sqrt{1-f'^2} } |v_2|^2 - \frac{f''}{ (1-f'^2)^{3/2} } \langle v_3 , v_3 \rangle.
\end{equation}

We now impose that the function $f$ satisfies the equation
\begin{equation}
r^{1-n} \frac{d}{dr} (  \frac{ r^{n-1} f'} { \sqrt{1-f'^2}  }       )= (n-1) \frac{f' }{ r\sqrt{1-f'^2} }+ \frac{f''}{ (1-f'^2)^{3/2} }=   \Lambda,
\end{equation}
for some constant $\Lambda$; in the special case where $m=1$, this means the hypersurface $\tilde{\Gamma}=\tilde{\Gamma}_{K,\Lambda}$ has constant mean curvature.
Upon integration,
\[
\frac{f'} { \sqrt{1-f'^2}  } = \frac{\Lambda}{n} r+ Kr^{1-n},
\]
so
\begin{equation}
f_{K, \Lambda}(r)= \int_0^r  \frac{K+ \frac{1}{n}\Lambda t^n}  { \sqrt{ t^{2n-2}+( K+ \frac{1}{n}\Lambda t^n  )^2 }   }dt.
\end{equation}
We restrict attention to the range of parameters $K>0$, $\Lambda\leq 0$, $0<r< (\frac{nK} {|\Lambda|})^{1/n}$ (when $\Lambda=0$, this just means $r>0$), so
\[
0<f'<1, \quad 0<f<r,
\]
  namely the hypersurface is constrained to lie within the spacelike-cone with apex $(\xi, \eta)$. At $r=0$, the hypersurface is singular, and is tangent to the light cone. As we increase $K\geq K_1>0$ while fixing $\Lambda$, then $f'_{K, \Lambda}$ and $f_{K,\Lambda}$ are increasing, so the 1-parameter family of hypersurfaces $\tilde{\Gamma}_{K,\Lambda}$ stay disjoint, and as $K\to +\infty$ they approach the light cone $\{ w=r\}$, sweeping out the region $\{ (x,y)\in \R^{n,m}| f_{K_1, \Lambda}(r)\leq w< r, \text{ and } r< (\frac{nK_1} {|\Lambda|})^{1/n} \}$.

\begin{rmk}
When $n=2, m=1, \Lambda=0$, then $f_{K,\Lambda}(r)= K \sinh^{-1}( \frac{r}{K} )$. The Euclidean signature analogue of the rotationally invariant minimal hypersurface is the catenoid, defined using the function $K\cosh^{-1}(\frac{r}{K})$ where $r\geq K$.
\end{rmk}

Let $\Pi$ be any n-dimensional spacelike subspace of the tangent space of $\tilde{\Gamma}_{K,\Lambda}$, then we can consider the mean curvature over $\Pi$, defined by tracing the second fundamental form over an orthonormal basis $\{e_i\}$ of $\Pi$,
\[
H_{\Pi}= -\sum_{e_i} \langle e_i, \nabla_{e_i} \vec{n}\rangle.
\]

\begin{lem}\label{HPiupperbound}
Any $n$-dimensional space-like $\Pi$ satisfies
$H_\Pi \geq -\Lambda$.
\end{lem}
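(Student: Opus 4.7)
The plan is to express $H_\Pi$ as the full Lorentzian trace of the shape operator of $\tilde\Gamma_{K,\Lambda}$ plus an explicit correction coming from the orthogonal complement of $\Pi$ inside $T\tilde\Gamma_{K,\Lambda}$, and then show the correction has the right sign. Since $T\tilde\Gamma_{K,\Lambda}$ has signature $(n,m-1)$ and $\Pi$ is a spacelike $n$-plane, its orthogonal complement $\Pi^\perp$ inside $T\tilde\Gamma_{K,\Lambda}$ is $(m-1)$-dimensional and timelike. Choosing orthonormal bases $\{e_i\}_{i=1}^n$ of $\Pi$ and $\{f_j\}_{j=1}^{m-1}$ of $\Pi^\perp$ (with $\langle f_j,f_j\rangle=-1$), the Lorentzian trace of the symmetric bilinear form $h(X,Y):=-\langle \nabla_X\vec n,Y\rangle$ over $T\tilde\Gamma_{K,\Lambda}$ splits as
\[
\Tr_{T\tilde\Gamma_{K,\Lambda}}(h)=H_\Pi-\sum_{j=1}^{m-1}h(f_j,f_j).
\]

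First I would evaluate the full trace. Applying (\ref{secondfundamentalformhypersurface}) to an orthonormal frame adapted to the splitting $TS^{n-1}\oplus\mathrm{span}(v_3)\oplus TS^{m-1}$, and using the ODE defining $f_{K,\Lambda}$, which is precisely $(n-1)\alpha-\beta=\Lambda$ for $\alpha:=\frac{f'}{r\sqrt{1-f'^2}}$ and $\beta:=-\frac{f''}{(1-f'^2)^{3/2}}$, a short computation gives $\Tr_{T\tilde\Gamma_{K,\Lambda}}(h)=-\Lambda-(m-1)\gamma$, where $\gamma:=\frac{1}{f\sqrt{1-f'^2}}$.

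For the correction term, I would decompose each $f_j=f_j^{(1)}+f_j^{(2)}+f_j^{(3)}$ into the three mutually orthogonal summands in (\ref{secondfundamentalformhypersurface}). Summing the $j=k$ orthonormality relations $\langle f_j,f_k\rangle=-\delta_{jk}$ gives $\sum_j|f_j^{(2)}|^2=(m-1)+\sum_j|f_j^{(1)}|^2+\sum_j\langle f_j^{(3)},f_j^{(3)}\rangle$. Substituting this into (\ref{secondfundamentalformhypersurface}) and combining with the trace identity above, the $(m-1)\gamma$ terms cancel and I obtain
\[
H_\Pi+\Lambda=(\gamma-\alpha)\sum_{j}|f_j^{(1)}|^2+(\gamma+\beta)\sum_j\langle f_j^{(3)},f_j^{(3)}\rangle.
\]

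The main (and only) obstacle is verifying both coefficients on the right are nonnegative, but both follow directly from the parameter restrictions built into $\tilde\Gamma_{K,\Lambda}$. Differentiating the ODE gives $\beta=(n-1)Kr^{-n}-\Lambda/n>0$ in view of $K>0$ and $\Lambda\leq 0$, so $\gamma+\beta>0$; and $\gamma-\alpha=\frac{r-ff'}{rf\sqrt{1-f'^2}}\geq 0$ follows immediately from $0<f<r$ and $0<f'<1$. Since the two sums on the right are automatically nonnegative, the estimate $H_\Pi\geq -\Lambda$ follows.
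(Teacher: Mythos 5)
Your proof is correct, and it takes a genuinely different route from the paper's. The paper estimates eigenvalues of the shape operator restricted to $\Pi$: every eigenvalue is at least $-\frac{f'}{r\sqrt{1-f'^2}}$, and by testing against the one-dimensional subspace $\{v_1=0\}\cap\Pi$ the largest eigenvalue is at least $-\frac{f''}{(1-f'^2)^{3/2}}$; summing these eigenvalue bounds and invoking the ODE gives $H_\Pi\geq -\Lambda$. You instead pass to the orthogonal complement $\Pi^\perp$ inside $T\tilde\Gamma_{K,\Lambda}$, exploit the basis-independence of the Lorentzian trace of the symmetric form $h$, and derive the exact identity
\[
H_\Pi+\Lambda=(\gamma-\alpha)\sum_{j}|f_j^{(1)}|^2+(\gamma+\beta)\sum_j\langle f_j^{(3)},f_j^{(3)}\rangle,
\]
with $\alpha=\frac{f'}{r\sqrt{1-f'^2}}$, $\beta=-\frac{f''}{(1-f'^2)^{3/2}}$, $\gamma=\frac{1}{f\sqrt{1-f'^2}}$. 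The sign conditions $\gamma>\alpha>0$ and $\beta>0$ are exactly the chain of inequalities the paper uses, so the same hypotheses are doing the same work. Your approach buys a bit more: the exact identity makes the equality case ($\Pi=\{v_2=0\}\cap T_p\tilde\Gamma_{K,\Lambda}$, i.e. $f_j^{(1)}=f_j^{(3)}=0$ for all $j$), which the paper asserts at the end of its proof, entirely transparent. The paper's eigenvalue argument is slightly more elementary in that it never needs to identify the signature of $\Pi^\perp$ or invoke basis-independence of a mixed-signature trace, but those facts are standard.
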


\begin{proof}
Using the second fundamental form formula (\ref{secondfundamentalformhypersurface}) and the inequalities
\[
\frac{1}{f \sqrt{1-f'^2}  }>    \frac{f'}{ r \sqrt{1-f'^2}    }>0 \geq\Lambda> \frac{f''}{ (1-f'^2)^{3/2}   },
\]
we see that $-\langle v, \nabla_v \vec{n}\rangle \geq -\frac{f'}{ r \sqrt{1-f'^2}    }\langle v, v\rangle$ for any $v\in \Pi$, so any eigenvalue of the second fundamental form is at least $-\frac{f'}{ r \sqrt{1-f'^2}    }$. Morever, on the 1-dimensional subspace $\{v_1=0\}\cap \Pi$, we have $
-\langle v, \nabla_v \vec{n}\rangle \geq -\frac{f''}{  ({1-f'^2})^{3/2}    }\langle v, v\rangle,
$ so the largest eigenvalue of the second fundamental form of $\tilde{\Gamma}$ is at least $-\frac{f''}{  ({1-f'^2})^{3/2}    }$. Thus the trace $H_\Pi$ is at least $ -\frac{f''}{  ({1-f'^2})^{3/2}    }-(n-1) \frac{f'}{ r \sqrt{1-f'^2}    }=-\Lambda$. The only way to achieve equality is $\Pi= \{ v_2=0  \}\cap T_p \tilde{\Gamma}_{K,\Lambda}$.
\end{proof}

We now derive a \textbf{comparison principle}.

\begin{lem}\label{comparisonprinciple}
Let $\Sigma$ be a maximal graph over the smooth bounded domain $\Omega\subset \R^n$, with acausal boundary $\partial \Sigma$. Assume $\xi\in \R^n\setminus \overline{\Omega}$, $\eta\in \R^m$, $K_1>0$, $\Lambda<0$ are chosen such that 
\[
\text{dist}(x, \xi)< ( \frac{nK_1}{|\Lambda|}  )^{1/n}, \forall x\in \Omega, \quad \text{and }
\partial \Sigma\subset \{ w\leq f_{K_1,\Lambda} (r) \},
\]
then $\Sigma\subset \{ w\leq f_{K_1,\Lambda} (r) \}$.
\end{lem}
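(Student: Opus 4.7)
The plan is to argue by contradiction via a sliding barrier. Suppose $\Sigma \not\subset \{w \leq f_{K_1,\Lambda}(r)\}$. The family $\{\tilde{\Gamma}_{K,\Lambda}\}_{K \geq K_1}$ foliates the region $\{f_{K_1,\Lambda}(r) \leq w < r\}$ and approaches the light cone as $K \to \infty$, so using the compactness of $\Sigma$ I would identify a critical value $K^* > K_1$ at which $\Sigma$ first becomes contained in $\{w \leq f_{K^*,\Lambda}(r)\}$ while touching $\tilde{\Gamma}_{K^*,\Lambda}$ at some point $p^*$. The strict inclusion $\partial\Sigma \subset \{w \leq f_{K_1,\Lambda}(r)\} \subsetneq \{w < f_{K^*,\Lambda}(r)\}$ then forces $p^*$ to lie in the interior of $\Sigma$, setting up a maximum principle argument.

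At $p^*$ the ambient function $\rho(x,y) := w - f_{K^*,\Lambda}(r)$ restricted to $\Sigma$ attains an interior maximum of zero, so the maximum principle yields $\Delta_\Sigma \rho(p^*) \leq 0$. Expanding the intrinsic Laplacian through the flat ambient connection gives
\[
\Delta_\Sigma \rho = \sum_i \text{Hess}\,\rho(e_i, e_i) + \nabla\rho \cdot \vec{H}_\Sigma,
\]
where $\{e_i\}$ is a Lorentzian-orthonormal frame for $T_{p^*}\Sigma$ and the Hessian and gradient of $\rho$ are Euclidean. Maximality of $\Sigma$ kills the second term. The tangency $\Pi := T_{p^*}\Sigma \subset T_{p^*}\tilde{\Gamma}_{K^*,\Lambda}$ forced by the interior max lets me rewrite $\text{Hess}\,\rho(e_i, e_i) = -\nabla\rho \cdot \nabla_{e_i}e_i$, and a direct computation relating $\nabla\rho$ to the Lorentzian unit normal $\vec{n}$ of the barrier (as displayed in the construction of $\tilde{\Gamma}_{K^*,\Lambda}$) produces
\[
\sum_i \text{Hess}\,\rho(e_i, e_i) = \sqrt{1-f'(r)^2}\, H_\Pi.
\]
Lemma \ref{HPiupperbound} then gives $H_\Pi \geq -\Lambda > 0$, so $\Delta_\Sigma \rho(p^*) > 0$, contradicting the maximum principle.

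The main obstacle I anticipate is producing the critical parameter $K^*$ at all: every barrier $\tilde{\Gamma}_{K,\Lambda}$ sits strictly inside the spacelike cone $\{w < r\}$ of $(\xi,\eta)$, so the sliding sweep can capture $\Sigma$ only after knowing $\Sigma$ itself lies in that cone. The maximal condition combined with the spacelike-frame identity $\sum_i \langle e_i, e_i\rangle = n$ yields the subharmonicity $\Delta_\Sigma(r^2 - w^2) = 2n > 0$, but this only gives the max principle as an upper bound for $r^2 - w^2$ rather than the lower bound required. I would handle this either by isolating any putative set $\Sigma \cap \{w \geq r\}$ and excluding it via the strong maximum principle applied on its complement, or by folding it directly into the sliding — if $\Sigma$ touched the light cone, tangency points with $\tilde{\Gamma}_{K_j,\Lambda}$ along a sequence $K_j \to \infty$ would yield the same Hessian contradiction. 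Once $\Sigma \subset \{w < r\}$ is secured, the remainder of the argument reduces to the computation above.
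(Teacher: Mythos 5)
Your core maximum-principle computation — touching barrier $\tilde\Gamma_{K^*,\Lambda}$ at an interior point $p^*$, decomposing $\Delta_\Sigma\rho$ into the ambient Hessian trace plus $d\rho(\vec H_\Sigma)$, killing the mean-curvature term, and producing $\sqrt{1-f'^2}\,H_\Pi\leq 0$ in contradiction with Lemma~\ref{HPiupperbound} — is exactly the paper's argument. The problem is the preliminary step you flag yourself: you must first know $\Sigma\subset\{w<r\}$ before the sliding can even begin, and neither of your two proposed fixes works.

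Your first fix (isolate $\Sigma\cap\{w\geq r\}$ and apply a strong maximum principle on its complement) runs into the same directionality problem you already noticed: $r^2-w^2$ is strictly subharmonic on a maximal graph, so the maximum principle controls $\sup(r^2-w^2)$, not $\inf$, and superharmonicity of $w^2-r^2$ gives its minimum on the boundary, which is $0$ on $\Sigma\cap\{w=r\}$ --- again consistent, no contradiction. Your second fix (take $K_j\to\infty$ and extract tangency points) does not make sense if $\Sigma$ has any point with $w\geq r$: the barriers $\tilde\Gamma_{K,\Lambda}$ remain strictly inside the open light-cone, so they never touch such a point, and there is no least $K$ capturing $\Sigma$ to begin with. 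Moreover, near the light-cone $f'\to 1$ and the factor $\sqrt{1-f'^2}$ degenerates, so the limiting Hessian computation collapses rather than delivering a contradiction.

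The paper's resolution is more elementary and uses neither the maximality of $\Sigma$ nor any auxiliary function; it is just the argument of Lemma~\ref{acausalboundary} applied to the segment from $x\in\overline\Omega$ to $\xi$. Because $\xi\notin\overline\Omega$, this segment must cross $\partial\Omega$; on subsegments inside $\overline\Omega$ the a priori spacelike gradient bound~(\ref{apriorigradientbound}) gives $|\vec u(x)-\vec u(x')|<|x-x'|$, on subsegments outside it acausality of $\vec\phi$ applies, and at the final boundary crossing $x^*$ the hypothesis $\partial\Sigma\subset\{w\leq f_{K_1,\Lambda}(r)\}\subset\{w<r\}$ gives $|\vec\phi(x^*)-\eta|<|x^*-\xi|$. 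Chaining these with the triangle inequality along the collinear points yields $|\vec u(x)-\eta|<|x-\xi|$, i.e.\ $\Sigma\subset\{w<r\}$, after which the sliding-barrier step proceeds exactly as you describe.
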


\begin{proof}
We begin by observing that, using  the idea of Lemma \ref{acausalboundary}, the graph $\Sigma$ must be contained in the spacelike-cone $\{ w<r \}$ with apex $(\xi, \eta)$. This implies $\Sigma\subset \{  w\leq f_{K, \Lambda} (r) \}  $ for some sufficiently large $K\geq K_1$ since these sublevel sets exhaust the spacelike cone; take $K$ to be minimal. If $K=K_1$ then the lemma is proved. We will assume $K>K_1$ and derive a contradiction.

By assumption there is an intersection point $p\in \Sigma \cap \tilde{\Gamma}_{K, \Lambda }$, and $p\notin \partial \Sigma$. Consider the function $h=w-f_{K,\Lambda}(r)$ on a neighbourhood of $p$. Since $h$ achieves maximum on $\Sigma$ at $p$, we see $T_p\Sigma\subset T_p \tilde{\Gamma}_{K, \Lambda}$, and
\[
0\geq \Lap_\Sigma h= \sum_{i=1 }^n \text{Hess}_{\R^{n,m}}(h)(e_i, e_i)+ dh(\vec{H}_{\Sigma} )= \sum_{i=1 }^n \text{Hess}_{\R^{n,m}}(h)(e_i, e_i),
\]
where $e_i$ is an orthonormal basis of $T_p\Sigma$, and the second equality uses the mean curvature zero condition. But for $\Pi= T_p\Sigma \subset T_p \tilde{\Gamma}_{K, \Lambda}$,
\[
\sum_{i=1 }^n \text{Hess}_{\R^{n,m}}(h)(e_i, e_i)= \langle \nabla_{e_i} ( - \sqrt{1-f'^2} \vec{n}  ) , e_i \rangle=\sqrt{1-f'^2} H_{\Pi},
\]
where the first equality uses  $\nabla_{\R^{n,m}} h= - \sqrt{1-f'^2} \vec{n}$, which comes from taking the dual of $dh=dw- f' dr$ with respect to the Lorentzian metric. Combining the above shows $H_{\Pi}\leq 0$, which contradicts Lemma \ref{HPiupperbound}.
\end{proof}



\subsection{Boundary gradient estimate}

We will now apply the comparison principle (Lemma \ref{comparisonprinciple}) to achieve a gradient bound, following the argument of Proposition 3.1 in \cite{BartnikSimon} quite closely.

\begin{prop}\label{Boundarygradientestimate}
(\textbf{Boundary gradient estimate}) Let $\Omega$ be a smooth bounded domain, and let $\vec{u}: \overline{\Omega}\to \R^m$ be a smooth solution to the Dirichlet problem with boundary data $\vec{\phi}: \partial \Omega\to \R^m$. Assume there exists a constant $0<\mu_0< 1$ with 
\begin{equation}\label{quantitativeacausality}
|\vec{\phi}(x)-\vec{\phi}(x')|\leq (1-\mu_0)|x-x'|, \quad \forall x, x'\in \partial \Omega, 
\end{equation}
and $\norm{\vec{\phi}}_{C^2(\partial \Omega) }\leq \kappa$. Then there is a constant $0<\mu<1$ depending only on $n, \Omega, \mu_0, \kappa$, such that at any boundary point $x_0\in \partial \Omega$, for any unit vector $v=(v_1, \ldots, v_n)\in \R^n$, we have the boundary gradient estimate
\[
|D_v \vec{u}|(x_0) = |\sum_j v_j \partial_j \vec{u}| (x_0)\leq 1-\mu.
\]
\end{prop}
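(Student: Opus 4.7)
My plan follows the barrier and comparison strategy of Bartnik-Simon (\cite{BartnikSimon}, Proposition 3.1), using the hypersurfaces $\tilde\Gamma_{K,\Lambda}$ from the previous subsection. Since $\vec u|_{\partial\Omega}=\vec\phi$, tangential directional derivatives at $x_0$ are inherited from $\vec\phi$ and satisfy $|D_\tau \vec u(x_0)|=|D_\tau \vec\phi(x_0)|\leq 1-\mu_0$ directly from the quantitative acausality (\ref{quantitativeacausality}). The core task is therefore to bound the normal derivative $D_\nu \vec u(x_0)\in\R^m$ and then assemble.

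For each $x_0\in\partial\Omega$ and each unit $\hat e\in S^{m-1}\subset\R^m$, I intend to construct a barrier. The $C^2$-smoothness of $\partial\Omega$ supplies a uniform exterior-ball radius $R=R(\Omega)>0$; choose a unit vector $\hat n$ close to the outward normal $\nu$ so that $B_R(\xi)\cap\Omega=\emptyset$ for $\xi=x_0+R\hat n$. Place the $\R^m$-center at $\eta_0=\vec\phi(x_0)-w_0\hat e$ with $w_0=f_{K,\Lambda}(R)$, and select $K>0,\Lambda\leq 0$ so that $f'_{K,\Lambda}(R)\leq 1-\mu_1$ for some target $\mu_1>0$. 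The key auxiliary step is to verify the barrier inequality
\[
|\vec\phi(x)-\eta_0|\leq f_{K,\Lambda}(|x-\xi|),\quad \forall x\in\partial\Omega,
\]
with equality at $x_0$; once this is secured, Lemma \ref{comparisonprinciple} extends the inequality to $\vec u$ on $\overline\Omega$. Using $\vec u(x_0)-\eta_0=w_0\hat e$ and differentiating along the inward normal at $x_0$ yields $\hat e\cdot D_\nu\vec u(x_0)\geq -f'_{K,\Lambda}(R)(\nu\cdot\hat n)$. Replacing $\hat e$ by $-\hat e$ gives the companion upper bound; varying $\hat e\in S^{m-1}$ then produces $|D_\nu\vec u(x_0)|\leq 1-\mu_1'$ for some $\mu_1'>0$ controlled by $\mu_1$.

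The main obstacle will be the boundary inequality. Near $x_0$, $|\vec\phi(x)-\eta_0|$ may grow linearly at rate as large as $1-\mu_0$ in some tangent direction $\tau$, whereas a radially symmetric $f(|x-\xi|)$ centered on the pure normal grows only quadratically in $|x-x_0|$ (the exterior ball gives $|x-\xi|-R=O(|x-x_0|^2/R)$ on $\partial\Omega$). I expect to absorb this by slightly tilting $\hat n$ in the tangential direction that cancels the first-order mismatch, explicitly $\hat n\cdot\tau=-(\hat e\cdot D_\tau\vec\phi(x_0))/f'_{K,\Lambda}(R)$ for each unit tangent $\tau$; the tilt angle is bounded by $(1-\mu_0)/f'_{K,\Lambda}(R)$, which forces $\mu_1$ to be at most a controlled function of $\mu_0$. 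The residual quadratic discrepancy is then dominated using the $C^2$-norm $\kappa$ and the principal curvatures of $\partial\Omega$ (absorbed into $R=R(\Omega)$). Globally, the profile must also dominate the full amplitude of $\vec\phi-\eta_0$ (of order $\kappa\cdot\mathrm{diam}(\Omega)$), which is where the parameter $\Lambda<0$ enters: the truncation at $r=(nK/|\Lambda|)^{1/n}$ and the shape of $f_{K,\Lambda}$ close to the light cone allow the profile to stay above the data on all of $\partial\Omega$ with an appropriate joint calibration of $K$ and $\Lambda$.

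To pass from the normal-derivative bound to the stated estimate on $|D_v\vec u(x_0)|$ for arbitrary unit $v\in\R^n$, decompose $v=v_T+v_N\nu$ and study the induced metric $g_{ij}=\delta_{ij}-\partial_i\vec u\cdot\partial_j\vec u$ at $x_0$. The acausality yields $g_{TT}\geq(2\mu_0-\mu_0^2)I$, the barrier yields $g_{NN}\geq 2\mu_1'-(\mu_1')^2$, and positive-definiteness of the full block matrix (the spacelike condition on $g$) forces the off-diagonal coupling $g_{TN}=-(D_{\tau_i}\vec\phi)\cdot(D_\nu\vec u)$ strictly below Cauchy-Schwarz saturation. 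A Schur-complement argument then produces a quantitative lower bound $\lambda_{\min}(g)\geq c(\mu_0,\mu_1')>0$, equivalent to $\|D\vec u\|_{\mathrm{op}}\leq 1-\mu$ with $\mu$ depending only on $n,\Omega,\mu_0,\kappa$. The subtle point is that a naive triangle inequality on the $(v_T,v_N)$ components is too lossy, since it would allow $|D_v\vec u|$ to exceed $1$; the spacelike constraint is precisely what rules out the worst-case cross alignment that a generic Cauchy-Schwarz would permit.
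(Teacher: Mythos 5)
There is a genuine gap in your reassembly step, and it is the crux of the proposition. You bound the tangential and normal components of $D\vec u(x_0)$ separately, and then propose to recover $\|D\vec u(x_0)\|_{\mathrm{op}}\leq 1-\mu$ via a Schur-complement argument, citing the spacelike condition as the input that forces $g_{TN}$ strictly below Cauchy--Schwarz saturation. But the spacelike condition $g>0$ is a \emph{qualitative} hypothesis: it gives strict positivity pointwise, but no uniform margin, and hence no quantitative lower bound on the Schur complement $g_{NN}-g_{NT}g_{TT}^{-1}g_{TN}$. Already in the scalar case $m=1$, the identity $|D u(x_0)|^2=|D_\tau u|^2+|D_\nu u|^2$ shows that the separate bounds $|D_\tau u|\leq 1-\mu_0$ and $|D_\nu u|\leq 1-\mu_1'$, together with mere positivity of $g$, are compatible with $|Du(x_0)|$ arbitrarily close to $1$. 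So the step you flag as ``subtle'' does not merely require care; it breaks. Interestingly, the decomposition is also unnecessary: once the barrier inequality $|\vec u(x)-\eta|\leq f_{K,\Lambda}(|x-\xi|)$ is established on $\overline\Omega$ with equality at $x_0$, differentiating at $x_0$ along \emph{any} inward unit $v$ (not only the normal) gives $\hat e\cdot D_v\vec u(x_0)\leq f'_{K,\Lambda}(|x_0-\xi|)\cdot\frac{-(v\cdot(x_0-\xi))}{|x_0-\xi|}\leq f'_{K,\Lambda}(|x_0-\xi|)$ by Cauchy--Schwarz on the radial factor, and varying $\hat e\in S^{m-1}$ yields $|D_v\vec u(x_0)|\leq f'_{K,\Lambda}(|x_0-\xi|)$ for all $v$ at once. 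This is exactly how the paper concludes --- no tangential/normal split, no Schur complement.

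Your barrier construction also has a second problem, though it is less fundamental. You anchor $\xi$ at the fixed exterior-ball radius $R(\Omega)$ and then tilt $\hat n$ tangentially to cancel the first-order mismatch, with $|\hat n_T|=|\hat e\cdot D'\vec\phi(x_0)|/f'_{K,\Lambda}(R)$. When $\mu_0$ is small, this tilt approaches $1$, so $B_R(\xi)$ no longer stays exterior to $\Omega$; in particular $|x-\xi|$ can \emph{decrease} below $R$ as $x$ moves along $\partial\Omega$, the favorable sign in $|x-\xi|-R=O(|x-x_0|^2/R)$ is lost, and the quadratic absorption you invoke is unavailable. The comparison principle (Lemma~\ref{comparisonprinciple}) needs only $\xi\notin\overline\Omega$, not an exterior ball, and the paper exploits this by placing $\xi$ at a small distance $\epsilon$ from $x_0$ (rather than at macroscopic radius $R$), with a tilt parameter $b$ chosen to match the tangential first derivatives of $w^2$ and $f_{K,\Lambda}^2$. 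At that scale $f_{K,\Lambda}^2(|x-\xi|)\approx|x-\xi|^2$ near the light cone, both Hessians are close to $2I$ on the tangent plane, and the acausality margin $\mu_0$ directly yields $\partial_i\partial_j(w^2)\leq 2(1-\mu_0/2)^2\delta_{ij}\leq\partial_i\partial_j(f_{K,\Lambda}^2)$, which is what actually closes the near-$x_0$ comparison.
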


\begin{proof}
(\cf Appendix in \cite{BartnikSimon})
Assume $x_0=0\in \partial \Omega$ and $e_n=(0,0\ldots, 1)$ is the inward pointing unit normal to $\partial \Omega$ at 0, so the tangential gradient operator is $D'=(  \partial_1, \ldots \partial_{n-1},0)$ at 0. Let $\vec{\theta}=(\theta^1, \ldots \theta^m)\in \R^m$ denote any given unit vector. Without loss of generality, the vector
\[
D'|_{0} \vec{\phi}\cdot \vec{\theta}= \sum_{\alpha=1}^m (D'|_0\phi^\alpha) \theta^\alpha= ae_1, \quad e_1=(1,0, \ldots, 0)\in \R^n,
\]
where we recall $\vec{\phi}$ is the boundary data of $\vec{u}: \R^n\to \R^m$. By assumption (\ref{quantitativeacausality}) $|a|=|D'|_0\vec{\phi}\cdot \vec{\theta}|\leq 1-\mu_0<1$.

Our next aim is to  construct suitable barriers. Fix $\Lambda<0$. Let $K=\epsilon^{-1}$ for some small number $\epsilon>0$ to be specified. Since $\Omega$ is a bounded domain, we can always ensure
\[
\text{diam}(\Omega)<< (nK/|\Lambda|  )^{1/n}
\]
for small $\epsilon$.
Choose parameters $\xi=\xi_\epsilon=\epsilon(-b, 0, \ldots, 0, -1)/\sqrt{1+b^2}\in \R^n$, with $b=b_\epsilon$ to be determined, and $\eta=\vec{\phi}(0)- f_{K, \Lambda}(\epsilon)\vec{\theta}\in \R^m$, so the hypersurface $\tilde{\Gamma}_{\Lambda, K}$ with apex at $(\xi, \eta)\in \R^{n,m}$ passes through $(0, \vec{\phi}(0))\in \R^{n,m}$. To specify $b$, we regard $f_{K, \Lambda}(r)=f_{K,\Lambda}( |x-\xi|  )$ and $w=|y-\eta|=|\vec{u}-\eta|$  as functions on $\overline{\Omega}$. The tangential derivatives at 0 are demanded to satisfy
\begin{equation}\label{barrierconstructiongradientrequirement}
D'|_0 (w^2)= D'|_0 (f_{K,\Lambda}^2).
\end{equation}
We compute
\[
D'|_0 (w^2)= 2( D'|_0\vec{u}   ) \cdot (\vec{u}(0)-\eta)= 2 ( D'|_0\vec{\phi}   ) \cdot (\vec{\phi}(0)-\eta)= 2 f_{K,\Lambda}(\epsilon) D'|_0 \vec{\phi}\cdot \vec{\theta},
\]
\[
D'|_0 (f^2_{K, \Lambda})= 2 f_{K,\Lambda}(\epsilon) \frac{1+ \Lambda\epsilon^{n+1}/n}{  \sqrt{ \epsilon^{2n}+ (1+ \Lambda \epsilon^{n+1}/n)^2      } } \frac{b}{\sqrt{b^2+1} } e_1,
\]
so the condition (\ref{barrierconstructiongradientrequirement}) translates into
\[
\frac{1+ \Lambda\epsilon^{n+1}/n}{  \sqrt{ \epsilon^{2n}+ (1+ \Lambda \epsilon^{n+1}/n)^2      } } \frac{b}{\sqrt{b^2+1} }=a,
\] 
which determines $b$ as long as $\epsilon$ is sufficiently small.

\textbf{Claim}:  for sufficiently small $\epsilon=\epsilon(n, \Omega, \mu_0, \kappa, \Lambda)>0$, the boundary $\partial \Sigma\subset \{ w\leq f_{K,\Lambda}(r)   \}$.

\begin{itemize}
\item When $x\in \partial \Omega$ is sufficiently far away from the origin, we compare
\[
w(x)=|\vec{\phi}(x)- \eta|\leq |\vec{\phi}(x)- \vec{\phi}(0)|+ |\vec{\phi}(0)-\eta| \leq (1-\mu_0) |x|+ f_{K,\Lambda}(\epsilon)\leq (1-\mu_0) |x|+ \epsilon, 
\]
with
$
f_{K, \Lambda}(|x-\xi|)\geq |x-\xi|-C\epsilon \geq |x|-|\xi|-C\epsilon\geq |x|-C\epsilon,
$
where the constants depend only on $n, \Omega, \Lambda$,
to see that 
\[
w(x)\leq f_{K, \Lambda}(|x-\xi|),\quad  \forall |x|\geq C_1(n, \Omega, \mu_0, \Lambda )\epsilon .
\]

\item  When $x\in \partial \Omega$ is very close to the origin, namely $|x|< C_1(n, \Omega, \mu_0, \Lambda )\epsilon$, the Claim is local in nature. For simplicity of presentation we pretend the boundary portion $\partial \Omega \cap \{|x|< C_1 \epsilon \}$ is flat, namely it is a coordinate open set in $\{x_n=0\}$; the general case of smooth domain is no more difficult. We compute
\[
\partial_i f_{K,\Lambda}= f'_{K,\Lambda}(|x-\xi|) \frac{x_i-\xi_i}{|x-\xi|  },  \quad i=1, 2, \ldots , n-1,
\]
where \[
f'_{K,\Lambda}(t)= \frac{ 1+\epsilon \Lambda t^n/n}{\sqrt{\epsilon^2 t^{2n-2} +( 1+ \epsilon \Lambda t^n/n )^2  }  }=1-O(\epsilon),
\]
and 
\[
\partial_i \partial_j f_{K, \Lambda}= \frac{f'_{K,\Lambda}(|x-\xi|)}{|x-\xi|} [   \delta_{ij}- \frac{ (x_i-\xi_i)(x_j-\xi_j)}{ |x-\xi|^2  }  ] -O(\epsilon), \quad i, j=1, 2, \ldots, n-1.
\]
Combining this with $f_{K, \Lambda}=|x-\xi|(1-O(\epsilon))$, we get
\[
\partial_i \partial_j (f^2_{K,\Lambda})=  2 \delta_{ij}-O(\epsilon), \quad i,j=1, 2, \ldots, n-1.
\]
But on $\partial \Omega$
\[
\begin{split}
\partial_i \partial_j (w^2)&= \partial_i \partial_j |\vec{\phi}-\eta|^2 = \partial_i \partial_j |\vec{\phi}- \vec{\phi}(0)|^2 +O(\epsilon) \\
&=2 \partial_i \vec{\phi}\cdot \partial_j \vec{\phi}+ 2(\vec{\phi}-\vec{\phi}(0))\cdot \partial_i \partial_j \vec{\phi}+O(\epsilon) \\
&\leq  2 \partial_i \vec{\phi}\cdot \partial_j \vec{\phi} + 2\kappa |\vec{\phi}-\vec{\phi}(0)|+O(\epsilon) \\
&\leq 2\partial_i \vec{\phi}\cdot \partial_j \vec{\phi}+ C\epsilon\leq 2(1-\mu_0)^2 \delta_{ij}+ C\epsilon,
\end{split}
\]
where we used $\norm{\vec{\phi}}_{C^2(\partial \Omega)}\leq \kappa$ and the fact that the positive semidefinite matrix $(\partial_i \vec{\phi}\cdot \partial_j \vec{\phi})$ is dominated by $ (1-\mu_0)^2(\delta_{ij})$, which is a consequence of (\ref{quantitativeacausality}). Thus by choosing $\epsilon<<1$ depending on the given constants, we can assume for $|x|< C_1\epsilon$ that the Hessian matrices satisfy the inequality
\[
(\partial_i\partial_j (w^2)) \leq 2(1-\mu_0/2)^2 \delta_{ij}  \leq ( \partial_i \partial_j (f^2_{K, \Lambda})  ).
\]
But by construction the functions $w^2$ and $f^2_{K, \Lambda}$ are tangent to first order at the origin (\cf (\ref{barrierconstructiongradientrequirement})), so the above convexity property implies $w^2\leq f^2_{K, \Lambda}$ for $|x|< C_1\epsilon$, hence the Claim is proved.
\end{itemize}

Now it follows immediately from the Claim and Proposition \ref{comparisonprinciple} that the maximal graph $\Sigma\subset \{  w\leq f_{K,\Lambda}(r) \}$, namely
\[
|\vec{u}(x)- \eta|\leq f_{K,\Lambda}(|x-\xi |), \quad \forall x\in \overline{\Omega}.
\]
In particular, using that $(0, \vec{u}(0))\in \tilde{\Gamma}_{K,\Lambda}$, we have for $v=(v_1, \ldots, v_n)\in \R^n$ with $v_n>0$ and $|v|=1$,
\[
\frac{1}{t}(|\vec{u}(t v) -\eta|-|\vec{u}(0)|) \leq \frac{1}{t}( f_{K, \Lambda}(|tv-\xi|)- f_{K, \Lambda} (|\xi|)), \quad\forall 0< t<<1
\]
and take the limit as $t\to 0$, we have an estimate on the directional derivative of $\vec{u}$ at 0,
\[
D_v \vec{u}\cdot \vec{\theta}= \sum_i v_i \partial_i \vec{u}\cdot \vec{\theta} \leq  f'_{K, \Lambda}(|\xi|) \frac{-v\cdot \xi}{ |\xi|  } \leq f'_{K, \Lambda}(\epsilon) \leq 1-\mu<1,
\]
where $0<\mu<1$ is a constant depending only on $ n, \Omega, \mu_0, \kappa, \Lambda$. Since $\mu$ does not depend on the direction $\vec{\theta}$, in fact
\[
|D_v \vec{u} |= | \sum_i v_i \partial_i \vec{u}|   \leq 1-\mu.
\]
Taking the limit as $v_n\to 0$ we can allow $v_n\geq 0$, and for unit vectors $v\in \R^n$ with $v_n<0$ we have $|D_{-v} \vec{u}|=|D_v \vec{u}|$, so $|D_v \vec{u}|\leq 1-\mu$ holds uniformly for all unit $v\in \R^n$ at all boundary points.
\end{proof}

\subsection{Uniform ellipticity}

It is straightforward to obtain a uniform ellipticity estimate on $\vec{u}$ from the boundary gradient estimate, due to the presence of a favourable maximum principle.

\begin{lem}(\cf Proposition 7 in \cite{Donaldson1})\label{Riccipositive}
Let $\Sigma$ be a maximal submanifold of $\R^{n,m}$. Then the Ricci curvature of the induced metric is non-negative.
\end{lem}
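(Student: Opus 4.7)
The proof will be a direct consequence of the Gauss equation, combined with the observation that the sign conventions conspire favorably because the normal bundle of a spacelike submanifold in $\R^{n,m}$ is timelike.

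First I would fix a point $p \in \Sigma$ and choose a local orthonormal frame $\{e_1, \ldots, e_n\}$ of $T\Sigma$ (spacelike, so $\langle e_i, e_j\rangle = \delta_{ij}$) together with an orthonormal frame $\{\xi_1, \ldots, \xi_m\}$ of $(T\Sigma)^\perp$, which is timelike and hence satisfies $\langle \xi_\alpha, \xi_\beta\rangle = -\delta_{\alpha\beta}$. Decompose the second fundamental form as $A = \sum_\alpha A^\alpha \otimes \xi_\alpha$, where each $A^\alpha$ is a symmetric bilinear form on $T\Sigma$. Since the ambient space $\R^{n,m}$ is flat, the Gauss equation reads
\[
\langle R(X,Y)Z, W\rangle = \langle A(X,Z), A(Y,W)\rangle - \langle A(X,W), A(Y,Z)\rangle,
\]
and the normal inner product on the right-hand side yields the extra minus sign:
\[
\langle R(X,Y)Z, W\rangle = -\sum_\alpha \bigl[ A^\alpha(X,Z) A^\alpha(Y,W) - A^\alpha(X,W) A^\alpha(Y,Z)\bigr].
\]

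Next I would trace this identity to obtain the Ricci tensor. Taking $Y = e_i$, $Z = X$, $W = e_i$ and summing over $i$,
\[
\mathrm{Ric}(X,X) = \sum_{i, \alpha} A^\alpha(e_i, X)^2 \;-\; \sum_\alpha A^\alpha(X,X) \cdot \mathrm{tr}_\Sigma A^\alpha.
\]
The key input is now the maximal condition: $\vec{H}_\Sigma = \sum_i A(e_i, e_i) = \sum_\alpha (\mathrm{tr}_\Sigma A^\alpha)\xi_\alpha = 0$ forces $\mathrm{tr}_\Sigma A^\alpha = 0$ for every $\alpha$, so the second sum vanishes identically and
\[
\mathrm{Ric}(X,X) = \sum_{i, \alpha} A^\alpha(e_i, X)^2 \;\geq\; 0,
\]
which is the desired non-negativity.

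There is no real obstacle here; the only point requiring slight care is to keep track of the sign produced by the timelike normal bundle, which is precisely what reverses the conclusion compared to the Riemannian ambient case (minimal submanifolds in $\R^{n+m}$, where the analogous computation yields non-positive Ricci). I would also briefly remark in passing that this formula shows $\mathrm{Ric}(X,X) = 0$ forces $A^\alpha(\cdot, X) = 0$ for every $\alpha$, a fact that will presumably be exploited in the later rigidity or ellipticity arguments.
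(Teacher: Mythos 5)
Your proof is correct and takes essentially the same route as the paper: both apply the Gauss equation for $\Sigma\subset\R^{n,m}$, kill the $\langle A(e_j,e_j),A(X,X)\rangle$ term using $\vec H_\Sigma=0$, and observe that the remaining term $-\sum_j\langle A(X,e_j),A(X,e_j)\rangle$ is non-negative because the normal bundle is timelike; you simply make the last step explicit by decomposing $A$ in a timelike orthonormal normal frame, whereas the paper invokes the timelike nature of $A(X,e_j)$ directly.
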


\begin{proof}
Let  $e_1, \ldots e_n$ be an orthonormal frame of $\Sigma$. Let $A(e_i, e_j)=(\nabla_{e_i} e_j)^\perp$ denote the second fundamental form of $\Sigma$. The Gauss equation for the submanifold $\Sigma\subset \R^{n,m}$ expresses the intrinsic Ricci tensor as
\[
\text{Ric}(X,Y)=\sum_j \langle A(e_j, e_j), A(X, Y)\rangle- \langle A(X, e_j), A(Y, e_j) \rangle.
\]
But the mean curvature $\sum_j A(e_j, e_j)=0$, so only the second term remains. Since the normal vectors $A(X, e_i)$ is timelike or zero, we see $\text{Ric}(X, X)\geq 0$ as required.
\end{proof}

\begin{prop}
(\textbf{Uniform ellipticity}) In the setting of Proposition \ref{Boundarygradientestimate}, the metric tensor $(g^{ij})=(g_{ij})^{-1}$ satisfies the estimate
\begin{equation}\label{Uniformellipticity}
\sum_i g^{ii} \leq \frac{n}{ \mu(2-\mu)  }.
\end{equation}
\end{prop}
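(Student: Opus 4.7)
My plan is to reduce the interior uniform ellipticity estimate to the boundary gradient estimate via a maximum principle argument on $\Sigma$ itself. First I would translate Proposition \ref{Boundarygradientestimate} into a lower bound on the smallest eigenvalue of the matrix $(g_{ij})$ at every boundary point: for any unit $v\in\R^n$ and $x_0\in\partial\Omega$, we have $v^{T}gv = 1 - |D_v\vec{u}|^2 \geq 1-(1-\mu)^2 = \mu(2-\mu)$, so the eigenvalues of $(g^{ij})$ at $x_0$ are all bounded by $\frac{1}{\mu(2-\mu)}$, whence $\sum_i g^{ii}(x_0) \leq \frac{n}{\mu(2-\mu)}$. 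Thus the desired inequality (\ref{Uniformellipticity}) already holds pointwise on $\partial\Sigma$.

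The remaining task is to propagate this bound to the interior, which I would do by showing that the function $\sum_i g^{ii}$ on $\Sigma$ is subharmonic. The key observation is that each ambient coordinate function $x^k:\R^{n,m}\to\R$, restricted to $\Sigma$, is $\Delta_\Sigma$-harmonic: since $x^k$ is affine-linear on $\R^{n,m}$ its ambient Hessian vanishes, so $\Delta_\Sigma x^k = \langle \nabla x^k, \vec{H}_\Sigma\rangle = 0$ by maximality. A direct computation in the graphical tangent frame $\tilde\partial_i=(e_i,\partial_i\vec{u})$ gives $|\nabla_\Sigma x^k|^2 = g^{kk}$ (no sum on $k$), and therefore $\sum_i g^{ii} = \sum_k |\nabla_\Sigma x^k|^2$.

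Applying the Bochner--Weitzenbock identity to each harmonic $x^k$,
\[
\tfrac12 \Delta_\Sigma |\nabla_\Sigma x^k|^2 = |\nabla^2_\Sigma x^k|^2 + \text{Ric}_\Sigma(\nabla_\Sigma x^k,\nabla_\Sigma x^k) \geq 0,
\]
where non-negativity of the right-hand side relies on Lemma \ref{Riccipositive}. Summing over $k$ shows $\sum_i g^{ii}$ is subharmonic on $\Sigma$, so the weak maximum principle yields $\sup_\Sigma \sum_i g^{ii} \leq \sup_{\partial\Sigma} \sum_i g^{ii} \leq \frac{n}{\mu(2-\mu)}$, which is exactly (\ref{Uniformellipticity}). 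There is no real technical obstacle here; the conceptual point is the clean interplay between harmonicity of ambient linear functions on a maximal submanifold and non-negativity of its Ricci curvature, which converts the boundary bound into a global bound with no loss.
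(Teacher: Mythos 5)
Your proof is correct and takes essentially the same route as the paper: harmonicity of ambient linear functions on the maximal graph, Bochner's formula together with Lemma \ref{Riccipositive}, and the maximum principle applied to a subharmonic function equal to $\sum_i g^{ii}$ up to an additive constant. The only cosmetic difference is that you use the $n$ base coordinates $x^k$ (giving $\sum_k|\nabla_\Sigma x^k|^2=\sum_i g^{ii}$), while the paper uses the $m$ graph components $u^\theta$ (giving $\sum_\theta|\nabla_\Sigma u^\theta|^2=\sum_i g^{ii}-n$); since these differ by the constant $n$, the argument is identical.
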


\begin{proof}
For $\theta=1, \ldots, m$, the component functions $u^\theta$
are harmonic on $\Sigma$, by the maximal submanifold condition. So we can apply a well known Bochner formula (\cf eg. \cite{Cheeger} equation 1.10) to get
\[
\frac{1}{2} \Lap_\Sigma |\nabla u^\theta|^2= |\text{Hess}_{\Sigma} (u^\theta) |^2 + \text{Ric}( \nabla u^\theta,  \nabla u^\theta  ) \geq 0.
\]
Summing  over $\theta=1,\ldots, m$, 
\[
\Lap_\Sigma (\sum_{\theta=1}^{m} |\nabla u^\theta|^2) \geq 0.
\]
Thus $\sum_{\theta=1}^{m}|\nabla u^\theta|^2=\sum_{i,j, \theta} g^{ij} \partial_i u^\theta \partial_j u^\theta= \sum_{i,j} g^{ij} \partial_i \vec{u} \cdot \partial_j \vec{u}  $ achieves maximum on $\partial \Omega$. But from the boundary gradient estimate Proposition \ref{Boundarygradientestimate}, 
the metric tensor $g_{ij}= \delta_{ij}- \partial_i \vec{u} \cdot \partial_j \vec{u}$ is uniformly equivalent to $\delta_{ij}$ on $\partial \Omega$:
\[
(\delta_{ij}) \geq  (g_{ij}) \geq (1-(1-\mu)^2) (\delta_{ij})= \mu(2-\mu) (\delta_{ij}),
\]
where $\mu$ is as in Proposition \ref{Boundarygradientestimate}.
Thus on $\partial \Omega$,
 \[
\sum_{i,j} g^{ij} \partial_i \vec{u} \cdot \partial_j \vec{u} = \sum_{i,j} (g^{ij}\delta_{ij}-  g^{ij}g_{ij})=\sum_i g^{ii}-n\leq \frac{n}{ \mu(2-\mu) }-n.
\]
Therefore at any $x\in \overline{\Omega}$,
\[
\sum_{\theta=1}^{m}|\nabla u^\theta|^2=\sum_{i,j} g^{ij} \partial_i \vec{u}\cdot \partial_j\vec{u} \leq \frac{n}{\mu(2-\mu)  }-n,
\]
or equivalently $\sum_i g^{ii}= n+ \sum_{i,j} g^{ij} \partial_i \vec{u}\cdot \partial_j\vec{u} \leq \frac{n}{\mu(2-\mu)  }$.
\end{proof}

\begin{rmk}
Once we have the boundary gradient estimate, uniform ellipticity can also be derived from the fact that $\Lap_{\Sigma} \log \det(g_{ij}) \leq 0$, as in \cite{Thorpe}.
\end{rmk}

\section{$C^{1,\alpha}$-estimate and existence of solution}

\subsection{$C^{1,\alpha}$-estimate}

As explained in \cite{Thorpe} there is a small logical gap between uniform ellipticity estimate and higher order estimates due to the vector valued nature of $\vec{u}$. This will be bridged by a $C^{1,\alpha}$-estimate.
For $x\in \Omega$, denote $d_x=\text{dist}(x, \partial \Omega)$. We write the following semi-norms for a $C^2$-function $\bar{u}$ on $\Omega'\subset \overline{\Omega}$:
\[
[D \bar{u}]_{\alpha, \Omega'}= \sup_{x,x' \in \Omega'} \sum_i \frac{|\partial_i \bar{u}(x)- \partial_i \bar{u}(x) | }{|x-x'|^\alpha},\quad [D^2 \bar{u}]_{0,\Omega'}=\sup_{\Omega'} \sum_{i,j} |\partial_i\partial_j \bar{u} |.
\]

\begin{lem}\label{interiorC2}
In the setup of Proposition \ref{Boundarygradientestimate},
the second derviatives satisfy the interior estimate
\[
|\partial_i \partial_j \vec{u}(x)| \leq C(n, \Omega, \mu_0, \kappa) d_x^{-1}, \quad i,j=1, \ldots, n.
\]
\end{lem}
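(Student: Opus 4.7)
The plan is a scaling reduction followed by a geometric cutoff-function maximum principle applied to the norm of the second fundamental form $A$ of the maximal graph.

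\textbf{Scaling reduction.} Fix $x \in \Omega$ and set $r := d_x/2$. The rescaled map
\[
\tilde u(y) := r^{-1}\bigl(\vec u(x+ry) - \vec u(x)\bigr), \quad y \in B(0,1),
\]
solves the same maximal graph system on $B(0,1)$, remains spacelike with $|\nabla \tilde u| < 1$, and inherits the uniform ellipticity estimate $\sum_i g^{ii} \leq n/(\mu(2-\mu))$ of Proposition \ref{Uniformellipticity}, since all of these conditions are scale invariant. As $|D^2 \vec u(x)| = r^{-1}|D^2 \tilde u(0)|$, the assertion reduces to a uniform interior bound $|D^2 \tilde u(0)| \leq C$ for any such rescaled solution.

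\textbf{Simons-type differential inequality.} On a maximal spacelike submanifold of flat $\R^{n,m}$, the Simons-type identity for $|A|^2$ takes the form
\[
\Delta_{\tilde\Sigma} |A|^2 \geq 2|\nabla A|^2 + c_{n,m} |A|^4
\]
for a positive constant $c_{n,m}$. The positive sign on the quartic term, opposite to the usual Riemannian Simons inequality for minimal submanifolds, arises from the timelike signature of the normal bundle, and it mirrors the sign flip in the Ricci curvature computation of Lemma \ref{Riccipositive}.

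\textbf{Maximum principle and conversion to Euclidean Hessian.} A classical cutoff maximum-principle argument, applied to $w := \phi^2 |A|^2$ with $\phi$ supported in a geodesic ball of $(\tilde\Sigma, g)$ of radius comparable to $1$ centered at $0 \in \tilde\Sigma$, combines the above differential inequality with Cauchy-Schwarz absorption of the cross term to force $|A|^2(0) \leq C$; uniform ellipticity ensures intrinsic and Euclidean balls are comparable, so such $\phi$ can be built with bounded Euclidean gradient. Since the ambient connection is flat and $|\nabla \tilde u|$ is bounded away from $1$ by uniform ellipticity, the tangential-normal decomposition of $\partial_i \partial_j \vec{\tilde u}$ relates $|A|$ to the Euclidean Hessian $|D^2 \tilde u|$ by bounded multiplicative factors. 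Hence $|D^2 \tilde u(0)| \leq C$, and rescaling yields the claimed $|D^2 \vec u(x)| \leq C/d_x$.

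The main technical obstacle is the Simons-type inequality itself in the higher-codimension Lorentzian setting, where the quartic term in the general Simons identity requires careful sign bookkeeping; once it is in hand, the cutoff maximum principle argument is routine and directly parallels the codimension-one treatment in \cite{BartnikSimon}. An alternative route that avoids Simons would be to apply Bochner to each harmonic component $\tilde u^\theta$ on $(\tilde\Sigma, g)$, obtain $\Delta_{\tilde\Sigma} \sum_\theta|\nabla \tilde u^\theta|^2 \geq 2 \sum_\theta |\operatorname{Hess}_{\tilde\Sigma} \tilde u^\theta|^2$ via non-negative Ricci, and bootstrap from the integral Hessian bound to a pointwise bound by Moser iteration, exploiting that each $\tilde u^\theta$ solves a scalar equation on $(\tilde\Sigma, g)$ (thereby sidestepping the failure of such estimates for general elliptic systems noted in \cite{Thorpe}).
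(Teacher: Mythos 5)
The paper disposes of this lemma in a few lines by citing the interior curvature estimate (2.9) of Jost--Xin \cite{Jost}: uniform ellipticity places a geodesic ball of radius $\sim d_x$ inside the interior of $\Sigma$, the cited result gives $|A|\leq C d_x^{-1}$ there, and this is then converted to a Euclidean Hessian bound. Your proposal instead tries to \emph{reprove} the curvature estimate from scratch via scaling, a Simons-type differential inequality, and a cutoff maximum principle. The scaling reduction and the final conversion from $|A|$ to $|D^2\vec u|$ are fine and essentially match the paper's conversion step.

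The crux, however, is the claimed Simons-type inequality $\Delta_{\tilde\Sigma}|A|^2 \geq 2|\nabla A|^2 + c_{n,m}|A|^4$. In codimension one (a maximal spacelike hypersurface in $\R^{n,1}$) this follows directly from the Simons identity with the Lorentzian sign flip, and the cutoff argument then gives the curvature estimate in the way you describe. In higher codimension the quartic terms of the Simons identity involve commutators $[A^\alpha, A^\beta]$ and Gram-matrix contractions, and whether their sum is bounded below by $c_{n,m}|A|^4$ after the signature flip requires a genuine term-by-term computation, not just a heuristic about the timelike normal bundle. You correctly identify this as the main technical obstacle but then assert the inequality without proof; that is a real gap. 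The paper avoids the issue entirely by citing the established estimate, which is the more economical route and is what I would recommend you do as well.

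Your fallback route also does not close. The Bochner formula plus $\mathrm{Ric}\geq 0$ yields, after integration by parts against a cutoff, an $L^2$ bound on $\sum_\theta|\mathrm{Hess}_{\tilde\Sigma}\,\tilde u^\theta|^2$ over intrinsic balls. But upgrading that to a pointwise bound by Moser iteration requires a scalar differential inequality for $|\mathrm{Hess}\,\tilde u^\theta|^2$ (or $|A|^2$) itself, and the commuted Bochner formula for the Hessian brings in the full Riemann tensor of $\tilde\Sigma$, not just the Ricci tensor. By the Gauss equation that is again quadratic in $A$, so you are led straight back to the Simons-type inequality you were hoping to sidestep. The remark that each $\tilde u^\theta$ solves a scalar equation on $(\tilde\Sigma,g)$ and hence ``bootstraps'' is also not quite right: De Giorgi--Nash--Moser applied to a uniformly elliptic divergence-form scalar equation with merely bounded measurable coefficients gives $C^{0,\alpha}$ (or, with the appropriate boundary input, $C^{1,\alpha}$) control, which is precisely what the paper does in the next subsection, not the $C^2$ interior bound sought here.
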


\begin{proof}
From the uniform ellipticity estimate, we see that the geodesic distance between any two points $(x, \vec{u}(x))$ and $(x', \vec{u}(x'))$ on $\Sigma$ is bounded from below by $C^{-1}|x-x'|$. In particular, for an interior point $x\in{\Omega}$ the geodesic ball with radius $C^{-1}d_x$ is contained in the interior of $\Sigma$. According to  (2.9) in \cite{Jost} (\cf also Section 4 in \cite{Yau}), in the geodesic ball with halved radius we have a second fundamantal form bound
\[
|A| \leq C d_x^{-1},
\]	
where the constant changes from line to line.
The reader can understand this estimate as an effective version of the Bernstein theorem for maximal submanifolds mentioned in the introduction.

The norm on $A$ is defined in terms of the natural induced metrics on the tangent and normal bundles of $\Sigma$. To convert this into coordinate expressions,
\[
|A|^2= \sum_{i,j,k,l=1}^n \sum_{\theta=1}^m  \frac{1}{ 1-|\sum_{i=1}^n \partial_i u^\theta|^2  } g^{ik}g^{jl}(\partial_i \partial_j u^\theta) (\partial_k \partial_l u^\theta)\geq \sum_{i,j=1}^n \sum_{\theta=1}^{m} |\partial_i \partial_j u^\theta|^2,
\]
from which the second order estimate follows.	
\end{proof}

We extend the boundary data $\vec{\phi}: \partial \Omega\to \R^m$ to a smooth vector valued function $\vec{\bar{\phi}}=(\bar{\phi}^1, \ldots ,\bar{\phi}^m)$ on $\overline{\Omega}$, whose $C^2$-norm is bounded in terms of only $n, \Omega, \kappa$. For $\theta=1,2,\ldots,m$, the functions $u^\theta- \bar{\phi}^\theta$ have zero boundary value, and satisfy the uniformly elliptic linear PDE
\[
\sum_{i,j} g^{ij} \partial_i \partial_j (u^\theta-\bar{\phi}^\theta )=- \sum_{i,j} g^{ij} \partial_i \partial_j \bar{\phi}^\theta, 
\]
where the coefficient matrix $g^{ij}$ is regarded as fixed, the RHS is $L^\infty$ controlled, and $u^\theta- \bar{\phi}^\theta$ is a priori controlled in $C^1$ norm.
We now apply Krylov's boundary gradient H\"older estimate (\cf Theorem 9.31 in \cite{GilbargTrudinger}) to obtain

\begin{lem}\label{KrylovboundarygradientHolder}
Suppose $\partial \Omega$ has a flat portion, namely $\overline{\Omega}\cap \{ |x|< R_0  \}=B^+_{R_0}=\{ |x|<  R_0, x_n \geq 0  \}$. Then for any $R\leq R_0$,
\[
\text{osc}_{B^+_R } \frac{u^\theta- \bar{\phi}^\theta  }{x_n } \leq C (\frac{R}{R_0})^{\bar{\alpha}},
\]
where $C$ and $\bar{\alpha}$ only depend on $n, \Omega, \mu_0, \kappa$.
\end{lem}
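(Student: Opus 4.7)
The plan is to recognise this as a direct application of Krylov's boundary H\"older estimate (Theorem 9.31 of \cite{GilbargTrudinger}) to the linear elliptic equation satisfied by $v^\theta := u^\theta - \bar{\phi}^\theta$. The main work is not to introduce new ideas, but to verify that every hypothesis of that theorem is controlled by the structural quantities $n, \Omega, \mu_0, \kappa$.

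First I would freeze the coefficients: treating $(g^{ij})$ as a fixed measurable coefficient matrix, the function $v^\theta$ solves the linear equation
\[
\sum_{i,j} g^{ij} \partial_i \partial_j v^\theta = -\sum_{i,j} g^{ij} \partial_i \partial_j \bar{\phi}^\theta, \quad v^\theta \equiv 0 \text{ on } \partial \Omega \cap \{|x|< R_0\}.
\]
The lower bound $g^{ij} \geq \delta^{ij}$ is immediate from $g_{ij} \leq \delta_{ij}$, which is a consequence of the spacelike condition, while an upper bound on the eigenvalues of $g^{ij}$ follows from the uniform ellipticity estimate (\ref{Uniformellipticity}). Together these give uniform ellipticity with constants depending only on $n, \Omega, \mu_0, \kappa$. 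The right-hand side is then $L^\infty$-controlled by $\norm{\vec{\bar{\phi}}}_{C^2} \leq C(n, \Omega, \kappa)$ times these ellipticity constants; and $v^\theta$ itself is $L^\infty$-controlled because $\vec{u}$ is bounded by Lemma \ref{acausalboundary} and $\vec{\bar{\phi}}$ is bounded by construction.

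With these inputs in hand I would invoke Theorem 9.31 of \cite{GilbargTrudinger} on the half-ball $B^+_{R_0}$, using the flat portion of $\partial \Omega$ as the flattened boundary where $v^\theta$ vanishes. That theorem's conclusion is exactly the assertion of the lemma: H\"older decay of the oscillation of $v^\theta/x_n$ at the flat portion, with exponent $\bar{\alpha}$ and constant $C$ depending only on the ellipticity bounds, on the $L^\infty$ norms of $v^\theta$ and of the source term, and on $R_0$, all of which are controlled by $n, \Omega, \mu_0, \kappa$.

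The main obstacle is conceptual rather than computational: one must use Krylov's estimate as a black box that requires \emph{no} H\"older regularity of the coefficients $g^{ij}$. This is essential at this stage of the bootstrap, since the only regularity of $g^{ij}$ established so far is the \emph{a priori} $L^\infty$ bound coming from uniform ellipticity. The classical Schauder boundary estimates would be inadequate here precisely because they would require a H\"older modulus for $g^{ij}$ that we are not entitled to assume until after this H\"older estimate feeds back into the argument.
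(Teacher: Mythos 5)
Your proposal is correct and takes essentially the same route as the paper: the paper also states the result as a direct application of Krylov's boundary Hölder estimate (Theorem 9.31 in \cite{GilbargTrudinger}) to the uniformly elliptic linear equation satisfied by $u^\theta-\bar{\phi}^\theta$, after freezing $(g^{ij})$ and verifying the ellipticity constants, the $L^\infty$ bound on the source, and the a priori control of $u^\theta-\bar{\phi}^\theta$ all depend only on $n,\Omega,\mu_0,\kappa$. The one detail worth flagging is that the paper records a $C^1$ bound on $u^\theta-\bar{\phi}^\theta$ (not merely $L^\infty$), which is what directly controls $\sup |u^\theta-\bar{\phi}^\theta|/x_n$ at the outer radius; your $L^\infty$ bound suffices as well once one invokes a barrier or the maximum principle to turn it into a bound on $(u^\theta-\bar{\phi}^\theta)/x_n$, but it is cleaner to note that the boundary gradient estimate already gives the $C^1$ control.
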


\begin{rmk}
When $\partial \Omega$ is not flat, then we can straighten the boundary by a local coordinate change. The role of $x_n$ is then replaced by any local boundary defining function. The upshot is that we can without loss of generality pretend the boundary is locally flat.
\end{rmk}

\begin{prop}
(\textbf{Global $C^{1, \alpha}$-estimate}) In the setting of Proposition \ref{Boundarygradientestimate}, we have a uniform bound
\begin{equation}\label{gradientHolder1}
[Du^\theta]_{\alpha, \overline{\Omega}}\leq C, \quad \theta=1,2,\ldots m,
\end{equation}
where $\alpha$ and $C$ depend only on $n, \Omega, \kappa, \mu_0$.
\end{prop}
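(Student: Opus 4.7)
Plan: The argument follows the corresponding step in \cite{BartnikSimon} (Proposition 1.7) quite closely. Each component $u^\theta$ solves the scalar linear equation $\sum_{i,j} g^{ij}\partial_i\partial_j u^\theta=0$ with $(g^{ij})$ uniformly elliptic by (\ref{Uniformellipticity}), so the estimate reduces to a scalar statement applied $m$ times. The strategy is to combine the interior Hessian decay of Lemma \ref{interiorC2} with the boundary gradient Hölder bound of Lemma \ref{KrylovboundarygradientHolder} via the standard interpolation patching.

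First I would establish boundary $C^{1,\bar\alpha}$-regularity. After locally straightening $\partial\Omega$ to $\{x_n=0\}$, the tangential derivatives $\partial_i u^\theta$ for $i<n$ agree with $\partial_i \bar\phi^\theta$ on the boundary and therefore inherit the $C^1$ regularity of the extension $\bar\phi^\theta$. For the normal derivative, taking $x_n\to 0^+$ in Lemma \ref{KrylovboundarygradientHolder} identifies the pointwise limit of $(u^\theta-\bar\phi^\theta)/x_n$ with $\partial_n u^\theta-\partial_n\bar\phi^\theta$, so the oscillation estimate converts directly into
\[
|\partial_n u^\theta(x_0)-\partial_n u^\theta(x_0')|\leq C|x_0-x_0'|^{\bar\alpha},\quad x_0,x_0'\in \partial\Omega,
\]
after subtracting the $C^1$ contribution of $\bar\phi^\theta$. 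The same bound extends into a boundary collar of controlled width by combining with the interior Hessian bound of Lemma \ref{interiorC2} to absorb the height coordinate $x_n$.

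The global estimate is then obtained by the classical patching argument. For arbitrary $x,y\in\overline\Omega$ with $r=|x-y|$, one distinguishes cases according to $d=\min(d_x,d_y)$. When $d$ is bounded below by a fixed constant depending only on $\Omega$, the interior Hessian bound $|D^2 u^\theta|\leq C d_x^{-1}$ of Lemma \ref{interiorC2} gives a Lipschitz, hence Hölder, estimate on $Du^\theta$ along the segment connecting $x$ and $y$. When $d$ is small, both points have controlled distance to their boundary projections, and the boundary Hölder estimate obtained in the previous paragraph applies after a triangle-inequality comparison through these projections.

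The main obstacle is precisely this matching, since the interior bound $|D^2 u^\theta(x)|\leq C d_x^{-1}$ degenerates at $\partial\Omega$ while Krylov's estimate degenerates as one moves into the interior. The balance between these two degenerations determines the admissible Hölder exponent $\alpha\leq \bar\alpha$ and requires careful book-keeping, but is routine once the boundary estimate of Lemma \ref{KrylovboundarygradientHolder} is available; since all estimates are componentwise in $u^\theta$, the higher-codimension case presents no additional difficulty over the $m=1$ case handled in \cite{BartnikSimon}.
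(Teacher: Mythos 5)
The overall plan — combine the interior Hessian decay of Lemma \ref{interiorC2} with Krylov's boundary estimate from Lemma \ref{KrylovboundarygradientHolder} via an interpolation/patching argument — is the right one and matches the paper's strategy. However, the step where you would actually close the argument is the one you dismiss as "routine book-keeping," and as written it has a genuine gap.

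Specifically, you propose to extend the boundary H\"older estimate into a collar "by combining with the interior Hessian bound of Lemma \ref{interiorC2} to absorb the height coordinate $x_n$," and similarly in the small-$d$ case to compare $Du^\theta(x)$ with $Du^\theta$ at its boundary projection by a triangle inequality. But the interior bound is $|D^2 u^\theta|\le C d_x^{-1}$, and integrating this along the normal segment from an interior point $x$ to its boundary projection gives $\int_0^{x_n} C\,t^{-1}\,dt$, which diverges logarithmically. So the Hessian bound alone does not even give that $Du^\theta$ is bounded up to the boundary, let alone H\"older. The missing ingredient is a \emph{scale-invariant} local H\"older bound $[Du^\theta]_{\alpha,\{|x'-x|<\tfrac12 d_x\}}\le C$, uniform in $d_x$, which is what makes the telescoping/dyadic comparison through boundary projections converge (each dyadic step contributes $C(x_n 2^{-k})^\alpha$, which is summable). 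The paper obtains this bound by applying the interpolation inequality of Lemma 6.32 in \cite{GilbargTrudinger} to the function $\bar u(x')= u^\theta(x')-\bar\phi^\theta(x') - x_n'\,\partial_n(u^\theta-\bar\phi^\theta)\big|_{(x_1,\dots,x_{n-1},0)}$, which vanishes to first order at the boundary projection; choosing the interpolation parameter $\epsilon=d_x^{\alpha/(1-\alpha)}$; and then using Krylov's estimate to bound the $L^\infty$ term by $C d_x^{1+\bar\alpha}$. The exponent balance $\frac{1+\alpha}{1-\alpha}\le 1+\bar\alpha$ is exactly what cancels the negative power of $d_x$. Without this mechanism, and in particular without subtracting the affine piece in $x_n$ so that Krylov's bound applies, the local constant degenerates as $d_x\to 0$ and the patching fails.

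Your first paragraph (identifying the limit of $(u^\theta-\bar\phi^\theta)/x_n$ with the normal derivative and deducing boundary $C^{1,\bar\alpha}$ regularity) is fine, as is the reduction to the scalar case and the treatment of points well in the interior. What needs to be supplied is the weighted interior interpolation step above, which is the actual core of the proof.
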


\begin{proof}
The proof is essentially an interpolation argument (\cf Problem 13.1 in \cite{GilbargTrudinger}). Given Lemma \ref{interiorC2}, it is enough to pretend $\overline{\Omega}\cap \{|x|<R_0\}=B_{R_0}^+$ and prove gradient H\"older estimate in $B_{R_0/4}^+$.

We start with an interpolation inequality (\cf Lemma 6.32 in \cite{GilbargTrudinger}): for any interior point $x\in B_{R_0/4}^+$, any $C^2$-function $\bar{u}$, and any $0<\epsilon\leq 1$,
\[
d_x^{1+\alpha} [ D\bar{u} ]_{\alpha, \{ x': |x'-x|< \frac{1}{2} d_{x}  \}} \leq C( \epsilon^{1-\alpha} d_x^2 [D^2 \bar{u}]_{ 0, \{ x': |x'-x|< \frac{2}{3} d_x   \}  } + \epsilon^{-1-\alpha} |\bar{u}|_{L^\infty( \{ x': |x'-x|< \frac{2}{3} d_x   \}   )}         ).
\]
We choose $\epsilon= d_x^{ \frac{\alpha}{1-\alpha}  }$, and assuming
$[D^2 \bar{u}]_{ 0, \{ x': |x'-x|< \frac{2}{3} d_x   \}  }\leq C d_x^{-1}$ we get
\[
[ D\bar{u} ]_{\alpha, \{ x': |x'-x|< \frac{1}{2} d_{x}  \}}\leq C(1+  d_x^{-\frac{1+\alpha}{1-\alpha} } |\bar{u}|_{L^\infty( \{ x': |x'-x|< \frac{2}{3} d_x   \}   )}    ). 
\]
In particular, this estimate applies to
\[
\bar{u}(x')= u^\theta(x')- \bar{\phi}^\theta(x')- x_n' \partial_n |_{(x_1, \ldots x_{n-1},0)} (u^\theta-\bar{\phi}^\theta),
\]
thanks to the interior $C^2$-estimate Lemma \ref{interiorC2}. The function $\bar{u}$ has the additional feature that its derivatives vanish at the boundary point $(x_1, \ldots, x_{n-1}, 0)$. We apply Lemma \ref{KrylovboundarygradientHolder} to the upper-half-balls  centred at $(x_1, \ldots, x_{n-1}, 0)$, to see that
\[
|\bar{u}|_{L^\infty( \{ x': |x'-x|< \frac{2}{3} d_x   \}   )}\leq C x_n d_x^{\bar{\alpha}}  \leq C d_x^{ 1+ \bar{\alpha}  } . 
\]
Choosing $0<\alpha<1$ with $\frac{1+\alpha}{1-\alpha}\leq 1+\bar{\alpha}$, we obtain $[ D\bar{u} ]_{\alpha, \{ x': |x'-x|< \frac{1}{2} d_{x}  \}}\leq C$, which combined with the $C^2$-bound on $\bar{\phi}^\theta$ gives
\begin{equation}\label{gradientHolder2}
[ D u^\theta ]_{\alpha, \{ x': |x'-x|< \frac{1}{2} d_{x}  \}}\leq C.
\end{equation}
This is almost our goal, except that the radius $\frac{1}{2}d_x$ degenerates near the boundary. To overcome this, we observe (\ref{gradientHolder2}) implies
\[
\begin{split}
&|D u^\theta(x)-D u^\theta(x_1, \ldots, x_{n-1}, 0)| \\
&\leq \sum_{k\geq 0} |Du^\theta(x_1, \ldots x_{n-1}, x_n 2^{-k})- Du^\theta(  (x_1, \ldots x_{n-1}, x_n 2^{-k-1}   )| \\
&\leq C( \sum_k (x_n 2^{-k})^\alpha  ) \leq C x_n^\alpha.
\end{split}
\]
Lemma \ref{KrylovboundarygradientHolder} gives that for $x, x'\in B_{R_0/4}^+$,
\[
|D u^\theta(x_1', \ldots, x_{n-1}', 0)-D u^\theta(x_1, \ldots, x_{n-1}, 0)| \leq C|x-x'|^{\bar{\alpha}} \leq C|x-x'|^{\alpha}.
\]
Thus by triangle inequality we can achieve a H\"older bound 
\[
|D u^\theta(x)-D u^\theta(x')| \leq  C|x-x'|^{\alpha}
\]
when $\min (x_n, x'_n)\geq \frac{1}{2}|x-x'|$ and $x,x'\in B_{R_0/4}^+$. This complements (\ref{gradientHolder2}) to imply the result.
\end{proof}

Once we have achieved $C^{1,\alpha}$ estimate on $\vec{u}$, Schauder theory applied to (\ref{Dirichlet}) allows us to estimate all higher order derivatives.

\begin{prop}\label{Higherorderbounds}
(\textbf{Higher order estimate}) In the setting of Proposition \ref{Boundarygradientestimate}. For $k\geq 2$, given a H\"older bound on the boundary data $\norm{\vec{\phi}}_{C^{k,\alpha}(\partial \Omega)}\leq \kappa(k, \alpha)$, then there is a global H\"older bound
\[
\norm{\vec{u}}_{ C^{k,\alpha}(\overline{\Omega})  } \leq C(n, \Omega, \mu_0, \kappa, k, \alpha, \kappa(k,\alpha)).
\]
\end{prop}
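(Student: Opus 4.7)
The plan is a standard Schauder bootstrap, decoupled componentwise. The key observation is that although (\ref{Dirichlet}) is a quasilinear system, for each fixed $\theta$ the function $u^\theta$ satisfies the \emph{linear} elliptic equation
\[
\sum_{i,j=1}^n g^{ij}(D\vec{u})\, \partial_i\partial_j u^\theta=0,
\]
where the coefficient matrix $g^{ij}$ depends \emph{only} on the first derivatives of $\vec{u}$. The coupling between the components $u^1,\dots,u^m$ therefore enters solely through the coefficients, which can be treated as ``known'' at each stage of the iteration.

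The first step is to upgrade $C^{1,\alpha}$ to $C^{2,\alpha}$. From the global $C^{1,\alpha}$ estimate combined with the uniform ellipticity bound (\ref{Uniformellipticity}), the matrix $g^{ij}=(g_{ij})^{-1}$ is in $C^{0,\alpha}(\overline{\Omega})$ with uniform bounds depending only on $n,\Omega,\mu_0,\kappa$, and is uniformly elliptic with ellipticity constants controlled in the same quantities. The boundary data $\phi^\theta$ is $C^{k,\alpha}$ with $k\geq 2$, so extending it to a $C^{k,\alpha}$ function on $\overline{\Omega}$ with controlled norm, we apply the global Schauder estimate (Theorem 6.6 in \cite{GilbargTrudinger}) to each component $u^\theta$ separately. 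This yields
\[
\norm{u^\theta}_{C^{2,\alpha}(\overline{\Omega})}\leq C(n,\Omega,\mu_0,\kappa,\alpha,\kappa(2,\alpha)).
\]

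Now I iterate. Suppose inductively that $\norm{\vec{u}}_{C^{j,\alpha}(\overline{\Omega})}$ is controlled for some $2\leq j<k$. Then since $g^{ij}$ is a smooth function of $D\vec{u}$, and its formula $g^{ij}=(\delta_{ij}-\partial_i\vec{u}\cdot\partial_j\vec{u})^{-1}$ involves the spacelike assumption which has been quantified uniformly by (\ref{Uniformellipticity}), the coefficients $g^{ij}$ are controlled in $C^{j-1,\alpha}(\overline{\Omega})$. Applying the higher-order global Schauder estimate to the linear equation for each $u^\theta$ (with the boundary data controlled in $C^{k,\alpha}\subset C^{j+1,\alpha}$) yields a $C^{j+1,\alpha}$ bound on $\vec{u}$. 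Iterating until $j+1=k$ produces the stated estimate.

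I do not expect any serious obstacle here: the decoupling into scalar linear equations bypasses the usual issues of systems, and the hard analytic work (uniform ellipticity and the $C^{1,\alpha}$ estimate) has already been done. The only point requiring mild care is that at each step the constants must be tracked so that they depend only on the quantities claimed; this is automatic because uniform ellipticity is preserved throughout, and the nonlinearity $g^{ij}(D\vec{u})$ is a smooth function on the compact set of admissible gradient values cut out by the $C^1$ bound.
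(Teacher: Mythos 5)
Your proof is correct and is exactly the bootstrap the paper has in mind when it states, just after the $C^{1,\alpha}$ estimate, that "Schauder theory applied to (\ref{Dirichlet}) allows us to estimate all higher order derivatives." The key point you rightly emphasize — that each component $u^\theta$ solves a scalar linear elliptic equation whose coefficients $g^{ij}(D\vec{u})$ are one derivative smoother than $\vec{u}$ at every stage, so the system decouples for the purposes of Schauder iteration — is precisely why the $C^{1,\alpha}$ estimate suffices to start the bootstrap.
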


\subsection{Continuity method and existence theorem}

We are now in the position to prove

\begin{thm}
(\textbf{Existence}) Given a smooth bounded domain $\Omega$ and acausal boundary data $\vec{\phi}: \partial\Omega\to \R^m$, there exists a smooth $\vec{u}: \overline{\Omega}\to \R^m$ solving the Dirichlet problem (\ref{Dirichlet}), or equivalently (\ref{Dirichlet2}).
\end{thm}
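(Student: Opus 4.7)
The plan is to use the \textbf{continuity method}. Connect the trivial problem to the desired one through the $1$-parameter family of boundary data $\vec{\phi}_t = t\vec{\phi}$ for $t \in [0,1]$. The key preliminary point is that $|\vec{\phi}_t(x)-\vec{\phi}_t(x')| \leq t(1-\mu_0)|x-x'|\leq (1-\mu_0)|x-x'|$, so both the quantitative acausality constant $\mu_0$ and the $C^{k,\alpha}(\partial\Omega)$-norms of $\vec{\phi}_t$ are controlled uniformly in $t\in [0,1]$. Set
\[
T = \{t \in [0,1] : \text{the Dirichlet problem with data } \vec{\phi}_t \text{ admits a smooth solution}\},
\]
and reduce the theorem to showing $T$ is nonempty, open and closed. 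Nonemptiness is immediate from the trivial solution $\vec{u}\equiv 0$ at $t=0$.

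For openness, fix $t_0\in T$ with smooth solution $\vec{u}_0$ and apply the implicit function theorem to the smooth map $F:(\vec{u},t)\mapsto \left(\sum_{i,j}g^{ij}(D\vec{u})\partial_i\partial_j \vec{u},\ \vec{u}|_{\partial\Omega}-\vec{\phi}_t\right)$ on the open set of spacelike $C^{2,\alpha}$ graphs. The linearization $D_{\vec{u}}F$ at $\vec{u}_0$ is a uniformly elliptic linear system whose principal symbol is the scalar $g^{ij}(D\vec{u}_0)\xi_i\xi_j$ times the identity on the $\R^m$-target; moreover, its associated quadratic form on perturbations $\vec{v}$ vanishing on $\partial\Omega$ is (up to sign) the second variation of volume (\ref{secondvariationofvolume}), which is negative definite. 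This forces triviality of the kernel on $C^{2,\alpha}_0(\overline{\Omega};\R^m)$, and combined with classical Schauder/Agmon--Douglis--Nirenberg theory for elliptic systems it promotes injectivity to an isomorphism onto $C^{0,\alpha}(\overline{\Omega};\R^m)$, producing solutions for $t$ near $t_0$. For closedness, let $t_k\in T$ with $t_k\to t_\infty$ and smooth solutions $\vec{u}_{t_k}$. Proposition \ref{Higherorderbounds}, whose constants depend only on $n,\Omega,\mu_0$ and the $C^{k,\alpha}$ norms of $\vec{\phi}$ (uniform in $t$), gives uniform $C^{k,\alpha}(\overline{\Omega})$ bounds on $\vec{u}_{t_k}$ for every $k\geq 2$. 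Arzela--Ascoli then extracts a convergent subsequence with limit $\vec{u}_\infty$; uniform ellipticity (\ref{Uniformellipticity}) passes to the limit to keep $\vec{u}_\infty$ spacelike, and passing to the limit in (\ref{Dirichlet}) shows $\vec{u}_\infty$ solves the Dirichlet problem with data $\vec{\phi}_{t_\infty}$, so $t_\infty\in T$.

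The main subtlety I expect lies in the openness step, specifically in verifying that the linearized vector-valued system is Fredholm between the chosen H\"older spaces; but this is standard once one observes that the principal symbol is a positive scalar multiple of the identity on the $\R^m$-components (so the system is strongly elliptic) and exploits the negative definiteness of $\delta^2\text{Vol}$ from (\ref{secondvariationofvolume}) for injectivity. The closedness step, by contrast, is essentially a direct harvest of the a priori estimates already established in the previous two sections, which constitute the technical heart of the paper.
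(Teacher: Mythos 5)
Your proof is correct and follows essentially the same approach as the paper: continuity method along the family $t\vec{\phi}$ with the trivial solution at $t=0$, openness via the implicit function theorem together with the negative-definiteness of $\delta^2\text{Vol}$ (which kills the kernel of the formally self-adjoint, index-zero linearization), and closedness from the uniform a priori estimates of Proposition~\ref{Higherorderbounds} and uniform ellipticity. The only small imprecision is that, because your $F$ uses the non-divergence form $\sum g^{ij}\partial_i\partial_j \vec{u}$, the quadratic form of $D_{\vec{u}}F$ is a $(\det g)^{-1/2}$-weighted version of $\delta^2\text{Vol}$ rather than $\delta^2\text{Vol}$ itself; since the two linearizations differ by the positive factor $\sqrt{\det g}$ at a solution, the kernel argument is unaffected.
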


We use the \textbf{continuity method}, namely we consider the 1-parameter family of Dirichlet problems, where the maximal graph equation is the same as in (\ref{Dirichlet2}), but the boundary data is changed to $t\vec{\phi}$ for $0\leq t\leq 1$. For $t=0$, then $\vec{u}=0$ provides the trivial solution.  As usual, we need to show the set 
$S=\{ t\in [0,1] | \text{ There is a $C^{2,\alpha}$ solution with boundary $t\vec{\phi}$ }       \}$ is open and closed. We remark that standard Schauder theory implies that $C^{2,\alpha}$ solutions are automatically smooth.

Notice the boundary data $t\vec{\phi}$ is also acausal, and satisfy better bounds compared to $\vec{\phi}$. Applying Proposition \ref{Higherorderbounds} easily shows that $S$ is \textbf{closed}.

To show the \textbf{openness} statement, by the Implicit Function Theorem it suffices to study the linearised  operator $\mathcal{L}$ of $ \vec{u}\mapsto \sum \partial_i ( g^{ij} \partial_j \vec{u}      )$ at a solution of the Dirichlet problem with boundary data $t\vec{\phi}$. It is instructive to relate this to the variation of the volume functional on the set of spacelike graphs with the fixed boundary data:
\[
\text{Vol}( \vec{u} )= \int_{\Omega} \sqrt{\det(g)} dx_1\ldots dx_n.
\]
Given a first variation $\vec{w}: \overline{\Omega}\to \R^m$ to $\vec{u}$, then (using Einstein summation convention)
\[
\delta g_{ij}= -\partial_i \vec{u} \cdot \partial_j\vec{w}_j - \partial_j \vec{u} \cdot \partial_i\vec{w} , 
\quad
\delta \sqrt{\det(g)}=\frac{1}{2} \sqrt{\det(g)} g^{ij} \delta g_{ij}= - \sqrt{\det(g)} g^{ij} \partial_i \vec{u}\cdot \partial_j\vec{w},
\]
so the first variation of volume is
\[
\delta \text{Vol}_{\vec{u} } (\vec{w})= -\int_{\Omega} g^{ij}\sqrt{\det(g)} \partial_i\vec{u} \cdot \partial_j\vec{w}= \int_{\Omega} \partial_j (g^{ij} \sqrt{\det(g)} \partial_i\vec{u})\cdot \vec{w}.
\]
The linearisation $\mathcal{L}$ is seen to be the Hessian of the volume functional (\ie $\mathcal{L}$ is a version of the Jacobi operator):
\[
\delta^2 \text{Vol}_{\vec{u} }(w, w')= \int_{\Omega} (\mathcal{L} \vec{w} )\cdot \vec{w'}= \int_{\Omega} (\mathcal{L} \vec{w'} )\cdot \vec{w}.
\]
Now, if we denote $\nu$ as the projection of the vector field $\vec{w}$ to the normal bundle of $\Sigma$, then the above expression is seen as the same thing as (\ref{secondvariationofvolume}):
\[
\delta^2 \text{Vol}_{\vec{u} }(\vec{w}, \vec{w})= \delta^2 \text{Vol} (\nu, \nu),
\]
which is negative definite, so the kernel of $\mathcal{L}$ must be zero. The formally self-adjoint linearised operator $\mathcal{L}: C^{2,\alpha}_0(\overline{\Omega})\to C^{0,\alpha}(\overline{\Omega})$ (the subscript $0$ means zero boundary data) is then seen to be a bijection, which implies the openness of $S$.

\begin{rmk}
Once we have achieved the higher order estimate Proposition \ref{Higherorderbounds}, the existence theorem can be proved by other methods, such as the Leray-Schauder approach used in \cite{Thorpe}.
\end{rmk}


\end{document}